\documentclass[10pt]{article}

\usepackage{amssymb, psfrag,graphicx,booktabs,color}
\usepackage{amsmath, amsthm}
\usepackage{algorithmicx}
\usepackage[ruled]{algorithm}
\usepackage{algpseudocode}

\newenvironment{alginc}[1][pseudocode]{\medskip
\algsetlanguage{#1}\begin{algorithmic}[1]}{\end{algorithmic}\medskip}

\newtheorem{theorem}{Theorem}
\newtheorem{proposition}[theorem]{Proposition}
\newtheorem{lemma}[theorem]{Lemma}

\newtheorem{remark}[theorem]{Remark}

\newcommand\di{\partial}
\newcommand\al{\alpha}

\newcommand\om{\omega}

\newcommand\ph{\varphi}

\newcommand\dm{d}
\newcommand\step{\rm samp}

\newcommand\R{\mathbb{R}}
\newcommand\C{\mathbb{C}}
\newcommand\Z{\mathbb{Z}}

\newcommand\Hs{\mathbb{H}}

\newcommand{\req}[1]{(\ref{eq:#1})}

\newcommand{\f}{\mathbf}
\newcommand{\fs}{\boldsymbol}

\DeclareMathOperator{\sign}{sign} 
\DeclareMathOperator{\sinc}{sinc} 

\DeclareMathOperator{\Qo}{\mathbf Q}
\DeclareMathOperator{\Mo}{\mathbf M}

\DeclareMathOperator{\Ft}{\mathbf F}

\newcommand\norm[1]{\|#1\|}
\newcommand\abs[1]{|#1|}
\newcommand\set[1]{\{#1\}}

\allowdisplaybreaks[1]

\hyphenation{op-tical net-works semi-conduc-tor}

\begin{document}

\title{A Reconstruction Algorithm for Photoacoustic Imaging based on the Nonuniform FFT}

\author{Markus~Haltmeier,~Otmar~Scherzer,~and~Gerhard~Zangerl.
\thanks{M. Haltmeier, O. Scherzer, and  G. Zangerl are with the Department
of Mathematics, University Innsbruck, Technikerstr.~21a, 6020 Innsbruck, Austria,~
e-mail: \tt{\{markus.haltmeier, otmar.scherzer, gerhard.zangerl\}@uibk.ac.at}.}
\thanks{O.~Scherzer is also with the Radon Institute of Computational and Applied Mathematics,
Altenberger Str.~69, 4040 Linz, Austria}}

\markboth{header}{header 2}

\maketitle

\begin{abstract}
Fourier reconstruction algorithms  significantly
outperform conventional back-projection algorithms in terms of computation time.
In photoacoustic imaging, these  methods require  interpolation in the Fourier space domain,
which creates artifacts in reconstructed images.
We propose a novel reconstruction algorithm that applies the one-dimensional nonuniform
fast Fourier transform to photoacoustic imaging.
It is shown theoretically and numerically that our algorithm avoids artifacts while preserving the
computational effectiveness of Fourier reconstruction.

\medskip \noindent {\bf Key--words.} Image reconstruction, photoacoustic imaging, planar measurement geometry, fast algorithm,
nonuniform FFT.
\end{abstract}

\section{Introduction}
Photoacoustic imaging (PAI) is a novel promising tool for
visualizing light absorbing structures in an optically scattering medium,
which carry valuable information for medical diagnostics.
It is based on the generation of acoustic waves by illuminating an object with pulses of
non-ionizing  electromagnetic radiation, and combines the high contrast of pure optical and the high resolution of
ultrasonic imaging.  The method has demonstrated great promise for a variety of biomedical applications, such as
imaging of animals \cite{KruKisReiKruMil03,WanPanKuXieSto03}, early
cancer diagnostics \cite{KruMilReyKisReiKru00,ManKhaHesSteLee05},
and imaging of vasculature \cite{KolHonSteMul03,EseLarLarDeyMotPro02}.

When an object is illuminated with short pulses of non-ionizing electromagnetic radiation, it absorbs a fraction of energy and heats up. This in turn induces acoustic (pressure) waves, that are recorded with acoustic detectors outside of the object.
Other than in conventional ultrasound imaging, where the source of acoustic waves is an external transducer,
in PAI the source is the imaged object itself. The frequency bandwidth of the recorded  signals is therefore generally broad and depends on the size and the shape of  illuminated structures.

\subsection{Planar recording geometry}

Throughout this paper we assume  a planar  recording  geometry, where the acoustic signals are recorded with
omnidirectional detectors arranged on planes (or lines), see Figure~\ref{fg:setup0}.
The planar geometry is of particular interest since it can be realized most easily in practical applications.
The recorded acoustic signals are then used to reconstruct the initially generated acoustic pressure
which represents optically absorbing structures of the investigated object.

\psfrag{transducer array}{detector array} \psfrag{optical
illumination}{optical illumination} \psfrag{object}{object}
\psfrag{pressure}{acoustic wave}
\begin{figure}[htb]
\centering
\includegraphics[width=0.5\textwidth]{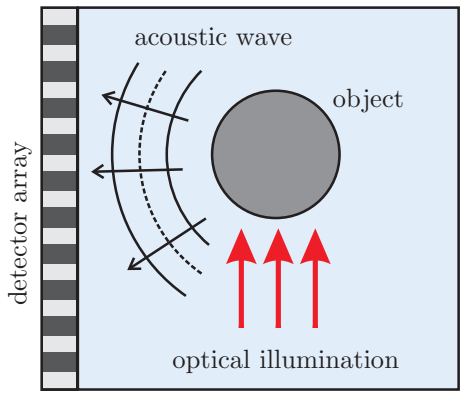}
\caption{{\bf Photoacoustic imaging for planar recording geometry.}
The object is illuminated by a pulse of electromagnetic radiation,
and reacts with an expansion. Induced acoustic waves are measured with an array of  acoustic
detectors arranged on a plane (or a line) and used to from an image of the object.} \label{fg:setup0}
\end{figure}

For the planar recording  geometry, two types of theoretically exact reconstruction formulas have been reported:
Temporal back-projection \cite{And88,BurBauGruHalPal07,Faw85,XuWan05} and Fourier domain formulas
\cite{AnaZhaModRiv07,And88,Faw85,NorLin81,XuFenWan02,KoeFraNiePalWeb01,KoeFreBebWeb01}.
Numerical implementations of those formulas often lead to fast and accurate image
reconstruction algorithms.

In temporal back-projection formulas the signals measured at time $t$ are back
projected over spheres of radius $v_s t$ with the detector position in the centre
($v_s$ denotes the speed of sound).
In Fourier domain formulas  this back-projection is performed by interpolation in the frequency
domain.  Reconstruction methods based on Fourier domain formulas are attractive since they
reconstruct an $N \times N \times N$ image in $\mathcal O (N^3 \log N)$ floating point operations
by using of the Fast Fourier Transform (FFT).
Straightforward implementations of back-projection type formulas, on the other hand,
require $\mathcal O (N^5)$ operation counts, see  \cite{HalSchuSch05,Nat86}.

The standard FFT algorithm assumes sampling on an equally spaced grid and therefore, in order
to implement the Fourier domain formulas, interpolation in the Fourier space
is required. Interpolation in the Fourier domain is a  critical issue, and creates artifacts
in reconstructed images, see the examples in Section \ref{sec:num}. One obtains significantly better results by
increasing the  sampling  density in the Fourier space. This is achieved by either zero-padding
\cite{XuFenWan02} or by symmetrizing the recorded signals around $t = 0$ (which is equivalent to
using the fast cosine transform instead of the FFT).
In this paper, we propose an efficient reconstruction algorithm that uses the nonuniform (or unevenly spaced)
FFT \cite{Bey95,DutRok93,FesSut03,GreLee04,PotSteTas01,Ste98} and further increases the quality of
reconstruction.

\subsection{Prior work and innovations}
The nonuniform FFT has been applied to a variety of medical imaging problems,
such as standard X-ray CT, magnetic resonance imaging, and diffraction
tomography \cite{BroBroZibAzh02,Fes07,GotGusFor01}, and has also been used implicitly
in gridding algorithms \cite{Osu85, SchoTim95}.
All those algorithms deal with the problem of recovering a two (or higher) dimensional object
function from samples of its multidimensional Fourier transform on a non-cartesian grid.

Our approach is conceptually  quite different to the above mentioned references:
The special structure of our problem allows to perform several one-dimensional nonuniform
FFTs instead of a single higher dimensional one.
This leads to a reduced numerical cost,  compared to the above algorithms.
The proposed algorithm   is  more  closely related to a reconstruction algorithm
for X-ray CT suggested in \cite[Section 5.2]{Fou03}, which also evaluates the Fourier
transform on  irregular samples by means of the one-dimensional FFT.

\subsection{Outline}

\medskip
This article is organized as follows: In Section \ref{sec:problem}
we present the mathematical basics of Fourier reconstruction in
PAI. In Section \ref{sec:nufft} we review the
nonuniform FFT which is then used to derive the nonuniform FFT based
reconstruction algorithm in Section \ref{sec:fra}. In Section
\ref{sec:num} we present numerical results of the proposed algorithm and compare it
with existing Fourier and back projection algorithms.
The paper concludes with a discussion of some
issues related to sampling and resolution in the Appendix.

\section{Photoacoustic Imaging} \label{sec:problem}

Let $C_0^\infty(\Hs)$  denote the space of smooth functions
with bounded support in the half space  $\Hs := \R^{d-1} \times (0, \infty)$,
$d \geq 2$.
Consider the initial value problem
\begin{align*}
    \left(\di_t^2 - \Delta \right) p(\f x, t) & = 0 \,,
    &(\f x,t) \in \R^{\dm} \times (0, \infty)\,,
    \\
    p(\f x, 0)  &=  f(\f x)
    \,, &
    \f x \in \R^{\dm}  \,,
    \\
    \di_t p( \f x, 0) & =  0\,,
    &\f x \in \R^{\dm} \,,
\end{align*}
with $f\in C_0^\infty(\Hs)$.  Here $\Delta$ denotes the Laplacian
in $\R^{\dm}$ and $\di_t$ is the derivative with respect to $t$. We
write $\f x=(x, y)$, $x \in \R^{d-1}$, $y \in \R$, and define the
operator $\Qo: C_0^\infty(\Hs) \to C^\infty(\R^d)$ by
\begin{equation*}
    (\Qo f) (x, t) := \left\{
                        \begin{array}{ll}
                          p( x, y=0, t ) \,, & \hbox{ if } t > 0\,, \\
                          0 \,, & \hbox{ otherwise} \,.
                        \end{array}
                      \right.
\end{equation*}
Photoacoustic imaging for planar recording geometry is concerned with reconstructing $f \in
C_0^\infty(\Hs)$ from incomplete and possibly erroneous knowledge of
$\Qo f$.  Of practical interest are the cases $d=2$ and $d=3$,
see \cite{KucKun08,PatSch07,SchGraGroHalLen09,XuWan06}.

\subsection{Exact inversion formula}

The operator  $\Qo$ can be inverted analytically by means of the
exact inversion formula
\begin{equation} \label{eq:inv}
     (\Ft f) (K_x, K_y)
    =
    \frac{2 K_y   \bigl( \Ft \Qo f \bigr)
    \left( K_x, \sign(K_y)
    \sqrt{\abs{K_x}^2 + K_y^2}\right) }{\sign(K_y) \sqrt{\abs{K_x}^2 + K_y^2 } }
\end{equation}
where   $(K_x, K_y)  \in \R^{d-1} \times  \R$, and $\Ft $ denotes the $d$-dimensional
Fourier  transform,
\begin{equation*}
    (\Ft \ph) (\f K)
    :=
    \int_{\R^{d}} e^{- i \f K \f x} \ph(\f x) \ d\f x \,,
    \qquad \f K = (K_x, K_y) \in \R^d \,.
\end{equation*}
Equation \req{inv} has been derived in \cite{NorLin81, XuFenWan02}
for three spatial dimensions. It can be proven in any
dimension by using the inversion formula for the spherical mean Radon
transform of \cite{And88, Faw85}. A related formula using
the Fourier cosine transform instead of the Fourier transform has
been obtained in \cite{KoeBea03, KoeFreBebWeb01} for $d = 2,3$.

\subsection{Partial Fourier reconstruction}

The inversion formula \req{inv} yields an exact reconstruction of $f$,
provided that $(\Qo f) (x, t )$ is given for all $(x, t)\in \R^d$.
In practical applications only a \emph{partial} (or \emph{limited
view}) data set is available \cite{PalNusHalBur07b, PanAna02, XuYWanAmbKuc04}.
In this paper we assume that data $( \Qo f ) (x, t )$ are  given only for $ (x,t) \in (0,X)^d$, see
Figure \ref{fg:setup0}, which are modeled by
\begin{equation}\label{eq:data-part}
    g(x, t) := w_{\rm cut}(x,t)
                   ( \Qo f ) (x, t ) \,,
\end{equation}
where $w_{\rm cut}$ is a smooth nonnegative cutoff function  that vanishes outside
$(0,X)^d$.  Using  data \req{data-part}, the function $f$ cannot be exactly
reconstructed in a stable way (see \cite{LouQui00, XuYWanAmbKuc04}).
It is therefore common to apply the exact inverse of $\Qo$ to the
partial data $g$ and to consider the result as an approximation of the
object to be reconstructed. More precisely, the function $f^\dag$ defined
by
\begin{equation}\label{eq:inv2}
    (\Ft f^\dag) (K_x, K_y)
    :=
    \frac{2 K_y  ( \Ft g )
    \left( K_x,\sign(K_y) \sqrt{\abs{K_x}^2 + K_y^2}\right) }
    {\sign(K_y)\sqrt{\abs{K_x}^2 + K_y^2 } }\,,
\end{equation}
is considered an approximation of $f$.
The function $f^\dag$ is called \emph{partial Fourier reconstruction}.

Fourier reconstruction algorithms in PAI name numerical implementations of \req{inv2}.
In this paper we apply the one-dimensional nonuniform FFT to derive a fast and accurate
algorithm for implementing \req{inv2}.

\section{The Nonuniform Fast Fourier Transform}
\label{sec:nufft}

The discrete Fourier transform of a vector $\f g = (g_n)_{n
=0}^{N-1} \in \C^N$ with respect to the nodes $\fs \omega =
(\omega_k)_{k = -N/2}^{N/2-1}$  (with $N$ even) is defined by
\begin{equation} \label{eq:dft}
    T[\f g](\omega_k)
    :=
    \sum_{n=0}^{N-1}
    e^{-i  \omega_k n 2\pi/N  }
    g_n
    \,, \quad k = -N/2, \dots, N/2-1 \,.
\end{equation}
Direct evaluation of the $N$ sums in \req{dft} requires $\mathcal
O(N^2)$ operations. Using the classical fast Fourier transform
(FFT) this effort can be reduced to $\mathcal
O(N \log N)$ operations. However, application of the classical FFT
is restricted to the case of evenly spaced nodes $\omega_k =k$, $ k=
-N/2, \dots, N/2-1$.

The one-dimensional nonuniform FFT (see \cite{Bey95,DutRok93,FesSut03,Fou03,GreLee04,PotSteTas01,Ste98})
is an approximate but highly accurate method for evaluating \req{dft} at arbitrary
nodes $\omega_k$, $ k= -N/2, \dots, N/2-1$, in $\mathcal O(N \log N)$
operations.

\subsection{Derivation of the nonuniform FFT}

To derive the nonuniform FFT we closely follow the presentation of \cite{Fou03},
which is based on the following lemma:

\begin{lemma}[{\cite[Proposition 1]{Fou03}}]\label{le:fou}
Let $c > 1$ and $\al < \pi(2c-1)$. Assume that $\Psi:\R \to \R$ is
continuous in $[-\al, \al]$, vanishing outside $[-\al, \al]$, and
positive in $[-\pi, \pi]$. Then
\begin{equation}\label{eq:fourmont}
    e^{-i \omega \theta}
    =
    \frac{c}{2\pi \Psi(\theta)}
    \sum_{j \in \Z}
    \hat \Psi (\omega - j/c)
    e^{-i  j \theta /c }
    \,, \quad
    \omega \in \R
    \,,
    \abs{\theta} \leq  \pi \,.
\end{equation}
Here $\hat \Psi(\omega) := \int_{\R} e^{-i\omega\theta} \Psi(\theta)
d\theta$ denotes the one-dimensional Fourier transform of $\Psi$.
\end{lemma}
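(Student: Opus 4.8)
The plan is to recognize the right-hand side of \req{fourmont} as a Fourier series in the variable $\theta$ and to identify its coefficients. Fix $\omega \in \R$ and regard both sides as functions of $\theta$ on the interval $[-\pi,\pi]$. The function $\theta \mapsto e^{-i\omega\theta}/\bigl(\text{something}\bigr)$ is not obviously periodic, so the natural move is instead to expand the $c$-dilated function $\theta \mapsto \Psi(\theta)\,e^{-i\omega\theta}$, which is supported in $[-\al,\al] \subset [-\pi(2c-1),\pi(2c-1)]$, hence in particular in an interval of length $< 2\pi(2c-1)$; a brief check shows $[-\al,\al]$ fits inside an interval of length $2\pi c$, so this function can be represented by a Fourier series with period $2\pi c$. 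Concretely, first I would write
\begin{equation*}
    \Psi(\theta) e^{-i\omega\theta}
    =
    \sum_{j\in\Z} c_j(\omega)\, e^{i j \theta /c}
    \,, \qquad
    c_j(\omega) = \frac{1}{2\pi c}\int_{-\pi c}^{\pi c} \Psi(\theta)\, e^{-i\omega\theta}\, e^{-ij\theta/c}\, d\theta \,.
\end{equation*}

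Next I would compute the coefficients. Because $\Psi$ vanishes outside $[-\al,\al] \subseteq [-\pi c,\pi c]$, the integral over $[-\pi c, \pi c]$ equals the integral over all of $\R$, so
\begin{equation*}
    c_j(\omega)
    =
    \frac{1}{2\pi c}\int_{\R} \Psi(\theta)\, e^{-i(\omega + j/c)\theta}\, d\theta
    =
    \frac{1}{2\pi c}\, \hat\Psi\!\left(\omega + \tfrac{j}{c}\right) \,.
\end{equation*}
Substituting this back, replacing the summation index $j \mapsto -j$, and dividing by $\Psi(\theta)$ — which is legitimate precisely on $[-\pi,\pi]$, where $\Psi$ is positive by hypothesis — yields
\begin{equation*}
    e^{-i\omega\theta}
    =
    \frac{c}{2\pi\Psi(\theta)} \sum_{j\in\Z} \hat\Psi\!\left(\omega - \tfrac{j}{c}\right) e^{-ij\theta/c}
    \,, \qquad \abs{\theta}\le\pi \,,
\end{equation*}
which is exactly \req{fourmont}.

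The main obstacle is justifying the Fourier series representation itself, i.e. the interplay between the support condition $\al < \pi(2c-1)$ and the period $2\pi c$, together with pointwise (rather than merely $L^2$) convergence of the series at every $\theta\in[-\pi,\pi]$. The support hypothesis $\al < \pi(2c-1) = \pi c + \pi(c-1)$ is exactly what guarantees that, after periodizing $\Psi(\cdot)e^{-i\omega\cdot}$ with period $2\pi c$, the translated copies do not overlap on $[-\pi,\pi]$ — indeed the nearest copy is shifted by $2\pi c$, and $2\pi c - \al > 2\pi c - \pi(2c-1) = \pi$, so on $[-\pi,\pi]$ the periodization agrees with the original function; this is what makes the division by $\Psi(\theta)$ and the final identity valid on $[-\pi,\pi]$ and not just on $[-\al,\al]$. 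For convergence one needs enough regularity: since $\Psi$ is continuous with compact support, $\Psi(\cdot)e^{-i\omega\cdot}$ is continuous and of bounded variation (or one simply invokes that it is continuous and piecewise smooth in the applications, where $\Psi$ is typically a B-spline or Gaussian), so its Fourier series converges uniformly; the decay of $\hat\Psi$ that this entails is also what will later make the nonuniform FFT a convergent, truncatable scheme. I would state the needed convergence as a mild standing assumption on $\Psi$ and otherwise treat the rest as the routine Fourier-coefficient computation above.
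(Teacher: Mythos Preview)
The paper does not supply a proof of this lemma; it is quoted from \cite[Proposition~1]{Fou03} and used as a black box. Your Fourier-series (equivalently, Poisson-summation) argument is the standard proof and is essentially what Fourmont does.

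There is one genuine slip in the execution. You assert that ``$[-\alpha,\alpha]$ fits inside an interval of length $2\pi c$'' and then use $[-\alpha,\alpha]\subseteq[-\pi c,\pi c]$ to replace $\int_{-\pi c}^{\pi c}$ by $\int_{\R}$ in the coefficient formula. But the hypothesis only gives $\alpha<\pi(2c-1)$, which for $c>1$ permits $\alpha>\pi c$ --- for instance $c=2$ and $\alpha$ just below $3\pi$, exactly the Kaiser--Bessel regime used later in the paper. In that case the integral identification fails and your formula $c_j(\omega)=\tfrac{1}{2\pi c}\,\hat\Psi(\omega+j/c)$ is not justified by the step you wrote. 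The correct route is the one you sketch only in your ``main obstacle'' paragraph: expand the $2\pi c$-periodization $\sum_{n\in\Z}\Psi(\theta-2\pi cn)\,e^{-i\omega(\theta-2\pi cn)}$, whose Fourier coefficients are $\tfrac{1}{2\pi c}\,\hat\Psi(\omega+j/c)$ by unfolding the sum irrespective of the size of $\spt\Psi$, and then observe that the condition $\alpha<\pi(2c-1)$ (equivalently $2\pi c-\alpha>\pi$) is precisely what makes the periodization coincide with $\Psi(\theta)e^{-i\omega\theta}$ on $[-\pi,\pi]$. Promote that paragraph from afterthought to the main argument and the proof is clean.
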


\begin{proposition} \label{thm:nufft}
Let $c$, $\alpha$, $\Psi$, and $\hat \Psi$ be as in Lemma
\ref{le:fou}. Then, for every $\f g = (g_n)_{n=0}^{N-1} \in \C^N$
and $\omega \in \R$ we have
\begin{equation} \label{eq:nufft-a}
   \sum_{n=0}^{N-1}
     e^{-i  \omega n 2\pi/N}
    g_n
   =
   \sum_{j \in \Z}
    e^{- i \pi (\omega - j/c)}
    \hat \Psi (\omega - j/c)
    \hat G_j
    \,,
\end{equation}
with
\begin{equation} \label{eq:nufft-b}
\hat G_j
   :=
   \frac{c}{2\pi}
   \left(
   \sum_{n =0}^{N-1}
    \frac{g_n e^{-i j n 2\pi/(Nc)}}{ \Psi(n 2\pi/N - \pi)}
\right)
    \,, \qquad j \in \Z \,.
\end{equation}
\end{proposition}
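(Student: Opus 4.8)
The plan is to substitute the pointwise identity \req{fourmont} from Lemma~\ref{le:fou} into the sum $\sum_{n=0}^{N-1} e^{-i\omega n 2\pi/N} g_n$ and then interchange the finite sum over $n$ with the infinite sum over $j \in \Z$. The key observation is a change of variables: for each index $n \in \{0, \dots, N-1\}$, set $\theta_n := n 2\pi/N - \pi$, so that $\theta_n \in [-\pi, \pi)$ and hence $\abs{\theta_n} \leq \pi$, putting us in the regime where \req{fourmont} applies. Writing $e^{-i\omega n 2\pi/N} = e^{-i\omega(\theta_n + \pi)} = e^{-i\omega\pi} e^{-i\omega\theta_n}$ and applying Lemma~\ref{le:fou} to $e^{-i\omega\theta_n}$ gives
\begin{equation*}
    e^{-i\omega n 2\pi/N}
    =
    e^{-i\omega\pi}
    \frac{c}{2\pi \Psi(\theta_n)}
    \sum_{j\in\Z} \hat\Psi(\omega - j/c) e^{-ij\theta_n/c}\,.
\end{equation*}

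Next I would multiply by $g_n$, sum over $n$, and swap the two summations. Inside, $e^{-ij\theta_n/c} = e^{-ij(n 2\pi/N - \pi)/c} = e^{ij\pi/c} e^{-ijn 2\pi/(Nc)}$, and the overall scalar prefactor $e^{-i\omega\pi} e^{ij\pi/c} = e^{-i\pi(\omega - j/c)}$ can be pulled outside the $n$-sum since it does not depend on $n$. What remains inside is $\frac{c}{2\pi}\sum_{n=0}^{N-1} \frac{g_n e^{-ijn 2\pi/(Nc)}}{\Psi(\theta_n)}$, which is exactly $\hat G_j$ as defined in \req{nufft-b} (recalling $\Psi(\theta_n) = \Psi(n 2\pi/N - \pi)$). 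Collecting the factors $\hat\Psi(\omega - j/c)$ and $e^{-i\pi(\omega-j/c)}$ yields precisely \req{nufft-a}.

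The one point requiring care is the interchange of $\sum_{n=0}^{N-1}$ and $\sum_{j\in\Z}$. Since the sum over $n$ is finite, this reduces to justifying that, for each fixed $n$, the series $\sum_{j\in\Z} \hat\Psi(\omega - j/c) e^{-ij\theta_n/c}$ converges — which is already part of the content of Lemma~\ref{le:fou}, so the interchange is a finite linear combination of convergent series and needs no further argument. (If one wants absolute convergence, note $\Psi$ is continuous of compact support, hence $\hat\Psi$ is integrable and decays, but this is not even needed here.) Thus the proof is essentially a bookkeeping exercise: the only genuinely substantive ingredient is Lemma~\ref{le:fou}, and the main thing to get right is the translation-by-$\pi$ shift that moves the nodes $n 2\pi/N \in [0, 2\pi)$ into the symmetric interval $[-\pi,\pi)$ where the lemma is valid, together with tracking the resulting phase factor $e^{-i\pi(\omega - j/c)}$.
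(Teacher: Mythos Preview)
Your proposal is correct and follows essentially the same route as the paper: substitute $\theta = n 2\pi/N - \pi$ into \req{fourmont}, multiply by $g_n$, sum over $n$, and interchange the (finite) $n$-sum with the $j$-series. Your write-up is in fact slightly more explicit than the paper's in tracking the phase factor $e^{-i\pi(\omega-j/c)}$ and in noting why the interchange is trivially justified.
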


\begin{proof}
Taking  $\theta = n 2\pi/N - \pi  \in [-\pi,\pi]$ in \req{fourmont},
gives
\begin{equation*}
    e^{-i \omega n  2\pi/N}
    =
    \frac{c}{2\pi \Psi(n 2\pi/N - \pi)}
   \sum_{j \in \Z}
    \hat \Psi(\omega - j/c)
    e^{-i j n 2\pi/(cN)}
    e^{-i \pi (\omega-j/c)} \,,
\end{equation*}
and therefore
\begin{equation*}
  \sum_{n =0}^{N-1}
  e^{-i \omega n  2\pi/N} g_n
   =
   \frac{c}{2\pi}
   \sum_{n =0}^{N-1}
   \sum_{j \in \Z}
     e^{-i \pi (\omega-j/c)}
     \hat \Psi(\omega - j/c)
    \frac{g_n e^{-i j n 2\pi/(cN)}}{ \Psi(n 2\pi/N - \pi)} \,.
\end{equation*}
Interchanging the order of summation in the right hand side of the
above equation shows \req{nufft-a}, \req{nufft-b}  and concludes the
proof.
\end{proof}

In the following we assume that $cN$ is an even number. Then
\begin{equation} \label{eq:nufft-c}
\hat G_j
   =
   \frac{c}{2\pi}
   \left(
   \sum_{n =0}^{c N-1}
    \frac{g_n }{ \Psi(n 2\pi/N - \pi)}  e^{-i j n 2\pi/(Nc)}
\right)
    \,, \quad j \in \Z \,,
\end{equation}
where  $g_n:=0$ for $n \geq N$, is an oversampled discrete
Fourier transform with  the oversampling factor $c$.
Moreover we assume that $\hat \Psi$ is concentrated
around zero and decays rapidly away from zero.
The nonuniform FFT uses the formulas \req{nufft-a}, \req{nufft-c}
to evaluate $T[\f g]$ at the nodes $\omega_k$.
The basic steps of the algorithm are as follows:

\begin{enumerate}
  \item[(i)]
  Append $(c-1)N$ zeros to the vector $\f g = (g_n)_{n=0}^{N-1}$ and evaluate
  $\hat G_j$, $j= -Nc/2, \dots,Nc/2-1$, in \req{nufft-c}
  with the FFT algorithm.

  \item[(ii)]
  Evaluate the sums in \req{nufft-a} approximately
  by using only the terms with $\abs{\omega_k -j/c} \leq   K$,
  where the \emph{interpolation length} $K$ is a small positive
  parameter.
\end{enumerate}
Since $\hat \Psi$ is assumed to decay rapidly, the truncation error in
Step~(ii) is small.

\begin{algorithm}[h]
\caption{Nonuniform FFT with respect to the nodes $\fs \omega =
(\omega_k)_{k = -N/2}^{N/2-1}$, using input vector $\f g =
(g_n)_{n=0}^{N-1}$, oversampling $c>1$, interpolation length $K$,
and window function $\Psi$.} \label{alg:nufft}

\begin{alginc}

\State
$\fs \Psi  \gets \bigl(\Psi (2\pi n / N - \pi)  \bigr)_n$
 \Comment{precomputations}

\State $\hat{\fs\Psi}  \gets \bigl( e^{-i (\omega_k- j/c)\pi/c} \hat
\Psi(\omega_k - j/c) \bigr)_{k,j}$

\State \Function{\tt nufft}{$\f g, \fs \omega, c, K, \fs \Psi, \hat{\fs \Psi}$}

\State $\f g \gets \f g / \fs \Psi \cdot c/(2\pi)$

 \State $\f g \gets \bigl( \f g, {\tt zeros}( 1 , (c-1)N \bigr)$
\Comment{zero-padding}

 \State $\f g \gets {\tt fft}(\f g)$
\Comment{one-dimensional FFT}

\For{$k=-N/2, \dots, N/2-1$}
\State
$\hat g_k
\gets
   \sum_{ \abs{j-c\omega_k} \leq cK }
   \hat \Psi_{k,j} g_j
$  \Comment{interpolation}
\EndFor

\State
\Return{$(\hat g_k)_k$}

\EndFunction
\end{alginc}
\end{algorithm}

The nonuniform Fourier transform is summarized in
Algorithm \ref{alg:nufft}. All evaluations of $\Psi$ and $\hat \Psi$ are
precomputed and stored. Moreover, the classical FFT is applied to a
vector of length $cN$. Therefore the numerical complexity of Algorithm \ref{alg:nufft}
is  $\mathcal O(c N \log N)$.
Typically $c = 2$, in which case the numerical effort of the nonuniform FFT is essentially
twice the effort of the one-dimensional classical FFT applied to an
input vector of the same length. See \cite[Section~3]{Fou03}
for an exact operation count, and a comparison between actual computation times of the
classical and the nonuniform FFT.

\subsection{Kaiser Bessel window}

In our implementation we choose for $\Psi$ the \emph{Kaiser
Bessel window},
\begin{equation*}
    \Psi^{\al, K}_{\rm KB}( \theta )
    :=
    \frac{1}{I_0(\al K)}
    \left\{
      \begin{array}{ll}
        I_0 (K \sqrt{\al^2 - \theta^2})
    \,, & \text{ if } \abs{\theta} \leq \al \,, \\
    0
    \,, & \text{ if } \abs{\theta} > \al  \,.
      \end{array}
    \right.
\end{equation*}
Here $I_0$ is the modified Bessel function of order zero. The one-dimensional Fourier transform of $\Psi^{\al, K}_{\rm KB}$ is
\begin{equation*}
    \hat \Psi^{\al, K}_{\rm KB}(\om )
    =
    2 \sinh (\al\sqrt{K^2 - \om^2})/\bigl( I_0(\al K) \sqrt{K^2-\om^2}\bigr)\,,
\end{equation*}
if $\omega  \in \R \setminus \set{-K,K}$ and $ 2 \alpha/(I_0(\al K))$ otherwise.

The Kaiser Bessel window is a good and often used candidate for
$\Psi$, since $\hat \Psi^{\al, K}_{\rm KB}(\omega)$ becomes
extremely small for $\abs{\omega} \geq K$. For example, with the
parameters $K = 3$, and $\al=3\pi$, we have for $\omega  \geq  K $,
\[
    \abs{ \hat\Psi^{\al, K}_{\rm KB} (\omega) / \hat\Psi^{\al, K}_{\rm KB} (0) } \leq
    \abs{ \hat\Psi^{\al, K}_{\rm KB} (K) / \hat\Psi^{\al, K}_{\rm KB} (0) }
    \simeq 3* 10^{-11} \,.
\]

\begin{figure}[htb]
\centering \includegraphics[width=0.45\textwidth]{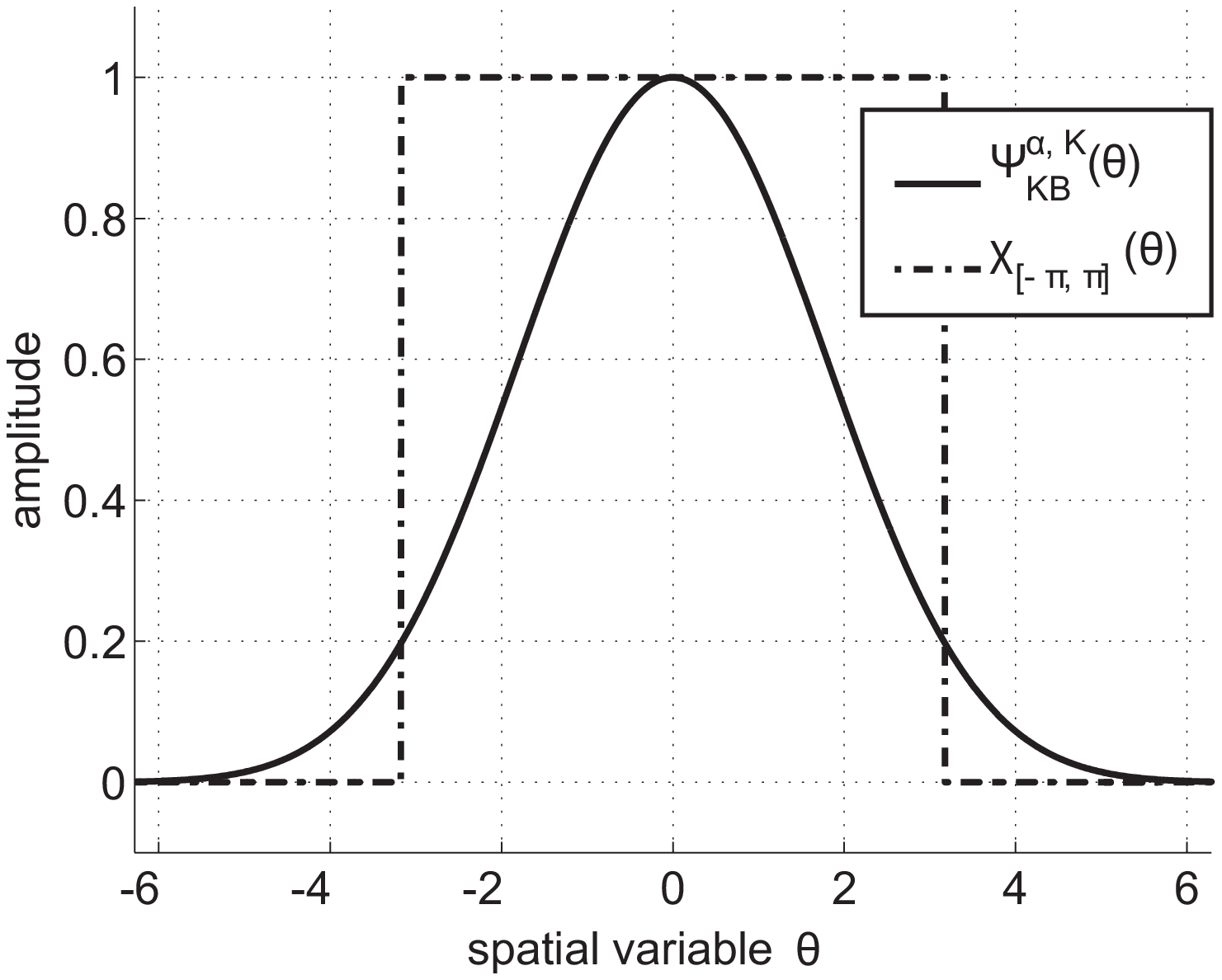}\quad 
\includegraphics[width=0.45\textwidth]{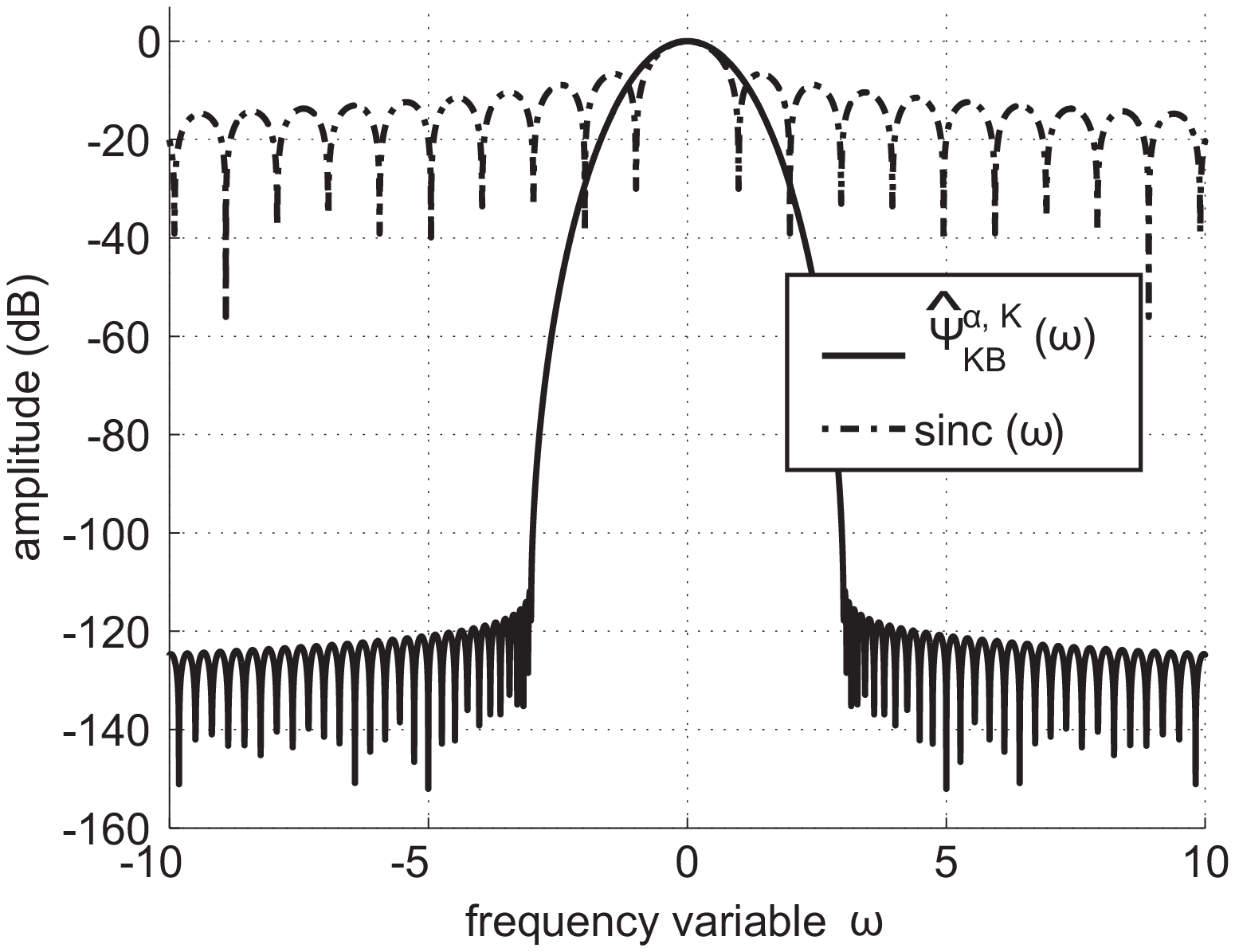}
\caption{
\emph{Left:} Kaiser-Bessel window $\Psi^{\al, K}_{\rm KB}(\theta)$ and characteristic function of
the interval $[-\pi, \pi]$.
\emph{Right:} Fourier transforms  $\hat \Psi^{\al, K}_{\rm KB}(\omega) $ and $2\pi\sinc(\omega)$
in dB (decibel). Here dB denotes the logarithmic decay  $10 \log_{10}( \abs{ \phi(\omega) / \phi(0) })$ of some quantity
$\phi(\omega)$.} \label{fg:decay}
\end{figure}

\begin{remark}\label{rem:nufft}
Take $c=1$ and let $\Psi$ be the characteristic function of the
interval $[-\pi,\pi]$. Then $\hat \Psi ( \om ) = 2\pi \sinc(\pi
\om)$ and \req{nufft-a}, \req{nufft-b}  reduce to the $\sinc$ series
\begin{equation*}
   \sum_{n=0}^{N-1}
   e^{-i  \omega n 2\pi/N}
   g_n
   =
   \sum_{j \in \Z}
    e^{- i \pi (\omega - j)}
    \sinc(\omega - j)
    \left(
    \sum_{n =0}^{N-1}
    g_n e^{-i j n 2\pi/N}
    \right)
    \,,
\end{equation*}
which is a discretized version of Shannon's sampling formula \cite{NatWue01,Uns00}
\begin{equation*}
   \hat g (\omega)
   =
   \sum_{j \in \Z}
    e^{- i \pi (\omega - j)}
    \sinc(\omega - j) \hat g (j)\,, \qquad \omega \in \R\,,
\end{equation*}
applied to the Fourier transform of a function $g:\R \to \R$ that vanishes outside
$[0,2\pi]$.

See Figure \ref{fg:decay} for a comparison
of $\sinc$ and $\hat \Psi^{\al, K}_{\rm KB}$, with $K=3$ and $\al =
3\pi$. One realizes that $\hat \Psi^{\al, K}_{\rm KB}$ decays much
faster than $\sinc$ and is therefore much better suited for truncated interpolation.
In fact, $\abs{ \hat\Psi^{\al, K}_{\rm KB} (\omega) / \hat\Psi^{\al, K}_{\rm KB} (0) } < 3* 10^{-11}$
for $\omega \geq 3$, whereas $\abs{\sinc (\omega)} < 0.01$ only for $\omega \geq 100/\pi$.
\end{remark}

An error estimate for the nonuniform FFT using the Kaiser Bessel
window
 is given in \cite{Fou03}. The result is
 \begin{multline*}
\Bigl|  e^{-i \omega \theta}
    -
    \frac{c}{2\pi \Psi(\theta)}
    \sum_{\abs{\omega - j/c} < K}
    \hat \Psi_{\rm KB}^{\alpha, K} (\omega - j/c)
    e^{-i  j \theta /c }
\Bigr|
\\
\leq \frac{30}{\pi I_0\bigl( K \pi  \sqrt{\alpha^2-1} /c^2 \bigr)}  \,.
\end{multline*}
For example, taking $c=2$, $\al = 3\pi$ and $K=3$, the above error
is as small as $3*10^{-8}$.

\section{A Fourier reconstruction Algorithm based on the
nonuniform FFT} \label{sec:fra}

In this section we apply the one-dimensional  nonuniform FFT to photoacoustic
imaging. Throughout the following we restrict our attention to
two dimensions, noting that the general case $d \geq 2$ can be
treated in an analogous manner.

Assume that $f$ is a smooth function that vanishes outside $(0,X)^2$,
and set $g := w_{\rm cut} \Qo f $, where $w_{\rm cut}$ is as in \req{data-part}.
Fourier reconstruction names an implementation of \req{inv2}, that uses
discrete data
\begin{equation} \label{eq:gd}
    g_{m,n}
    : =
    g (  m \Delta_{\step}, n \Delta_{\step}) \,,
\end{equation}
with  $(m,n) \in \set{0, \dots, N-1}^2$ and reconstructs an
approximation
\begin{equation} \label{eq:fd}
    f_{m,n}
    \simeq
    f^\dag (  m \Delta_{\step}, n \Delta_{\step}) \,,
\end{equation}
with $ (m,n) \in \set{0, \dots, N-1}^2$.
Here $f^\dag$ is defined by \req{inv2}, $N$ is an even number and
$\Delta_{\step} := X/N$. In the Appendix  we show
that the sampling in \req{fd}, \req{gd} is sufficiently fine,
provided that
$\Delta_{\step} \leq \pi/\Omega$, where $\Omega$ is the essential
bandwidth of $f$. 

Discretizing \req{inv2} with the trapezoidal rule  gives
\begin{multline} \label{eq:dft-2d}
    \sum_{n=0}^{N-1}
    \left( \sum_{m = 0}^{N-1}
    e^{-i  (l n+km)2\pi/N}
    f_{m,n}\right)
    \\ =
    \frac{2 k}{ \om_{k,l }}    \sum_{n=0}^{N-1}
    e^{-i  \omega_{k,l} n 2\pi/N}
    \left( \sum_{m = 0}^{N-1}
    e^{-i  k m 2\pi/N}
    g_{m,n}\right)\,,
\end{multline}
where
\begin{equation*}
    \omega_{k,l}
    : =  \sign( l ) \sqrt{k^2 + l^2} \,,
    \quad (k,l) \in \set{-N/2, \dots, N/2 -1}^2 \,.
\end{equation*}
One notices that the inner sums in \req{dft-2d},
\begin{equation} \label{eq:fft-1a}
    \tilde g_{k,n}
    :=
    \sum_{m = 0}^{N-1}
    e^{-i k m 2\pi/N}
    g_{m,n}
\end{equation}
can be exactly  evaluated with $N$ one-dimensional FFTs, and the outer sums
\begin{equation} \label{eq:fft-2}
    \hat g_k(\omega_{k,l})
    :=
    \sum_{n=0}^{N-1}
    e^{-i  \omega_{k,l} n 2\pi/N}
    \tilde g_{k,n}
\end{equation}
can be approximately evaluated with $N$ one-dimensional nonuniform FFTs.
Denoting the resulting approximation by $\hat g_{k,l} \simeq \hat g_k(\omega_{k,l})$
and setting
\begin{equation} \label{eq:fft-0}
\hat f_{k, l}  := \frac{2 k  \, \hat g_{k,l}}{ \om_{k,l}}
\,, \qquad
(k,l) \in \set{-N/2, \dots, N/2-1}^2\,,
\end{equation}
we finally find
\begin{equation} \label{eq:fft-1}
    f_{n,m}
    :=
    \frac{1}{N^2}
    \sum_{k,l = N/2}^{N/2-1}
    e^{i  (km + ln)  2\pi/N}
    \hat f_{k, l}
\end{equation}
with the inverse two-dimensional FFT algorithm.

\begin{algorithm} \caption{Nonuniform FFT based algorithm
for calculating $\f f = (f_{m,n})_{n,m=0}^{N-1}$ using data $\f g =
(g_{m,n})_{m,n=0}^{N-1}$, oversampling factor $c$, interpolation
length $K$, and window function $\Psi$.}
\label{alg:nff}

\begin{alginc}
\State
$\fs \Psi  \gets \bigl(\Psi (2\pi n / N - \pi)  \bigr)_n$
 \Comment{precomputations}

\State $\hat{\fs\Psi}  \gets \bigl( e^{-i (\omega_k- j/c)\pi/c} \hat
\Psi(\omega_k - j/c) \bigr)_{k,j}$

\State

\Function{\tt FouRecNufft}{$\f g, c, K, \fs \Psi, \hat{\fs \Psi}$}
\For{$n=0, \dots, N-1$} \State $\f h \gets  (g_{m,n})_{m}$ \State
$(\tilde g_{k,n})_{k}
        \gets
        {\tt fft}(\f h)
    $
 \Comment{one-dimensional FFT}

\EndFor

\State $\f l \gets (-N/2, \dots, N/2-1)$

\For{$k=-N/2, \dots, N/2-1$} \State $\fs \omega \gets {\tt sign}(\f
l) \sqrt{k^2+\f l^2} $ \State
    $\f h
     \gets {\tt nufft}(\f h, \fs \omega, c, K, \fs \Psi, \hat{\fs \Psi})$
    \Comment{nonuniform FFT}

\State $(f_{k,l})_l
    \gets
    2k \,  \f h / \fs \om
$ \EndFor

\State $\f f \gets (f_{k,l})_{k,l}$ \State $\f f \gets {\tt
ifft2}(\f f)$ \Comment{two-dimensional inverse FFT}

\State \Return{$\f f$}

\EndFunction
\end{alginc}
\end{algorithm}

The nonuniform FFT based reconstruction algorithm is summarized in
Algorithm \ref{alg:nff}. Its numerical complexity can easily be
estimated. Evaluating \req{fft-1a} requires $N \mathcal O(N \log N)$
operations ($N$  one-dimensional FFTs), evaluating \req{fft-2}
requires $N \mathcal O(N \log N)$ operations ($N$ non-uniform FFTs),
and \req{fft-1} is evaluated with the inverse two-dimensional FFT in
$\mathcal O(N^2 \log N)$ operations. Therefore the overall
complexity of Algorithm \ref{alg:nufft} is $\mathcal O(N^2 \log N)$.

In the next section we numerically compare Algorithm \ref{alg:nff} with standard Fourier algorithms presented
in the literature \cite{JaeSchuGerKitFre07,XuFenWan02},
which all differ in the way how the sums in \req{fft-2} are evaluated:

\begin{enumerate}
\item
\textbf{Direct Fourier algorithm.}
Equation \req{fft-2} cannot be  evaluated with the classical FFT algorithm
because the nodes $\omega_{k,l}$ are non-equispaced.
The most simple way to evaluate \req{fft-2} is with direct summation.
Because there are $N^2$ such sums, direct Fourier reconstruction requires
$\mathcal O(N^3)$ operations.
Consequently it does not lead to a fast algorithm.
However, since \req{fft-2} is evaluated exactly, it is optimally
suited to evaluate the image quality of  reconstructions  with  fast
Fourier algorithms.

\item
\textbf{Interpolation based algorithm.} A fast and
simple alternative to direct Fourier reconstruction is as
follows: Choose an oversampling factor $c\geq1$ and exactly  evaluate
\begin{equation*}
    \hat g_k(\omega)
    :=
    \Delta_{\step}
    \sum_{n=0}^{N-1}
    e^{-i  \omega n 2\pi / N }
    \tilde g_{k,n}\,,
\end{equation*}
at the uniformly spaced nodes $\omega = \Delta_{\step} j / c $,
$j\in \set{0, \dots, Nc-1}$, with the one-dimensional FFT algorithm.
In a next step, linear interpolation is used to find approximate values $\hat g_{k,l}
\simeq \hat g_k(\omega_{k,l})$, see \cite{XuFenWan02}.
Evaluating $\hat g_{k,l}$ with linear interpolation
requires $\mathcal{O} (N^2)$ operation and therefore the overall numerical effort
of linear interpolation based Fourier  reconstruction is $\mathcal O(N^2 \log N)$.

Algorithms using nearest neighbor interpolation instead of the linear one have the same
numerical complexity  and have also been applied to PAI (see, e.g., \cite{CoxArrBea07}).
Higher order polynomial interpolation has been
applied in \cite{Kun07b} for a cubic recording geometry.
\item
\textbf{Truncated $\sinc$ reconstruction.}
If the function
$\Psi$ in Algorithm \ref{alg:nff} is chosen as the
characteristic function of the interval $[-c\pi, c\pi]$, $c \geq
1$, then the nonuniform fast Fourier transform reduces to the
truncated $\sinc$ interpolation considered in \cite{JaeSchuGerKitFre07}.
However, due to the slow decay of $\sinc(\omega)$, truncation
will introduce a non-negligible error in the reconstructed image
(see Remark \ref{rem:nufft}).
\end{enumerate}

The Fourier algorithms are  also be compared with a numerical implementation of the
back-projection formula
\begin{equation}\label{eq:ubp}
        f(x,y)
        =
        - \frac{2 y}{\pi}
        \int_{\R}
        \left(
        \int_{r}^\infty
        \frac{ (\partial_t t^{-1} \Qo f)(x', t)}{ \sqrt{t^2-r^2}} dt
        \right)
        dx' \,,
\end{equation}
where $r = \sqrt{(x-x')^2 + y^2}$ denotes the distance between the detector location $(x', 0)$ and the
reconstruction point $(x, y)$.
Equation \req{ubp} has been obtained in  \cite{BurBauGruHalPal07}  by applying the method of descent to the three-dimensional
universal  back-projection formula discovered  by Xu and Wang \cite{XuWan05}.
Again \req{ubp} gives an exact reconstruction only if it is applied to complete data  $(\Qo f)(x,t)$, $(x,t)\in \R^2$.
In the numerical experiments the back-projection formula is applied to the partial data  $w_{\rm cut} \Qo f $,
see \req{data-part}, and implemented with $\mathcal O(N^3)$ operation  counts as described in  \cite[Section 3.3]{BurBauGruHalPal07}.

\section{Numerical Results}
\label{sec:num}

In the following we numerically compare the proposed nonuniform FFT based algorithm with
standard Fourier algorithm and  the back projection algorithm based on \req{ubp}.

The cutoff function $w_{\rm cut}$ is constructed by convolution of
\begin{equation*}
\ph_\epsilon(x,t) =
\begin{cases}
C_\epsilon \exp\bigl(-1/{(\epsilon-x^2-t^2)}^4\bigr),& \text{if } x^2 + t^2 < \epsilon\,,\\
0,&\text{ otherwise } \,,
\end{cases}
\end{equation*}
with the characteristic function of $[0,X]^2$, where $\epsilon$ is a
small parameter and $C_\epsilon$ is chosen in such a way that
$\int_{\R^2} \ph_\epsilon(x,t)dxdt = 1$.
Typically, $\epsilon$ is chosen as a ``small'' multiple of the sampling step size
$\Delta_{\rm samp} = X/N$.

In all numerical experiments we take $X=1$, and $N = 512$.
The window width $\alpha$ is chosen  to be  slightly smaller than $\pi(2c - 1)$,
where $c$ is the oversampling factor that determines the accuracy of the Fourier reconstruction
algorithms.

\psfrag{original}{\color{white} $f_{\rm circ}$}
\psfrag{datacirc}{\color{white} $\Qo f_{\rm circ}$}
\psfrag{originalphant}{\color{white} $f_{\rm phant}$}
\psfrag{dataphant}{\color{white} $\Qo f_{\rm phant}$}
\psfrag{BP}{\color{white} back projection}
\psfrag{back projection}{\color{white} back projection}
\psfrag{nearest, c=1}{\color{white} nearest, $c=1$}
\psfrag{nearest, c=2}{\color{white} nearest, $c=2$}
\psfrag{linear}{\color{white} linear, $c=2$}
\psfrag{, c=2}{}
\psfrag{lin}{\color{white} linear, $c=1$}
\psfrag{, c=1}{}
\psfrag{tsinc, c=2}{\color{white} truncated $\sinc$, $c=2$}
\psfrag{direct}{\color{white} direct}
\psfrag{kb, c = 2}{\color{white} nonuniform FFT, $c=2$}

\begin{figure}[t!]
\centering
\includegraphics[width=0.325\textwidth,height=0.3\textwidth]{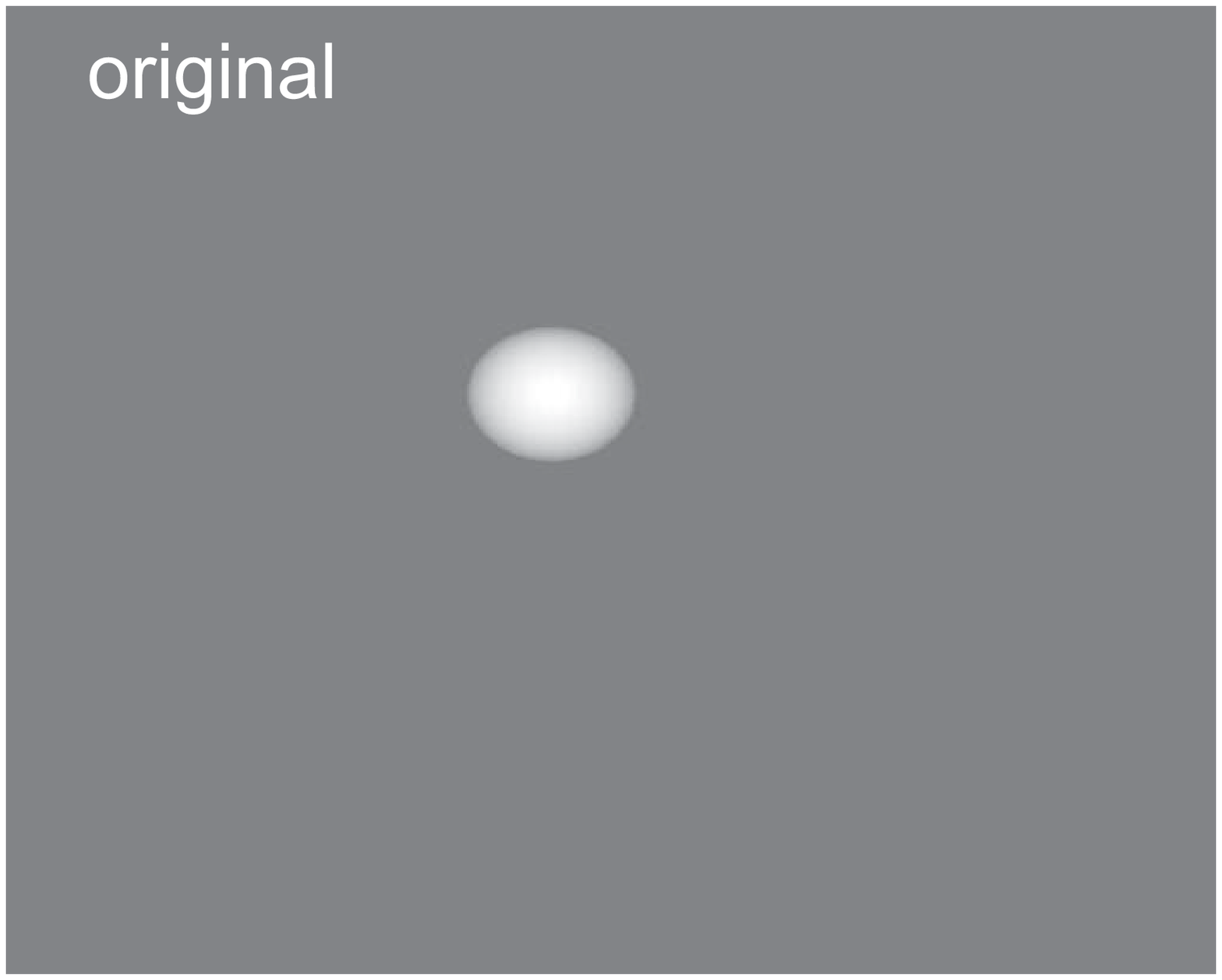}\quad
\includegraphics[width=0.325\textwidth,height=0.3\textwidth]{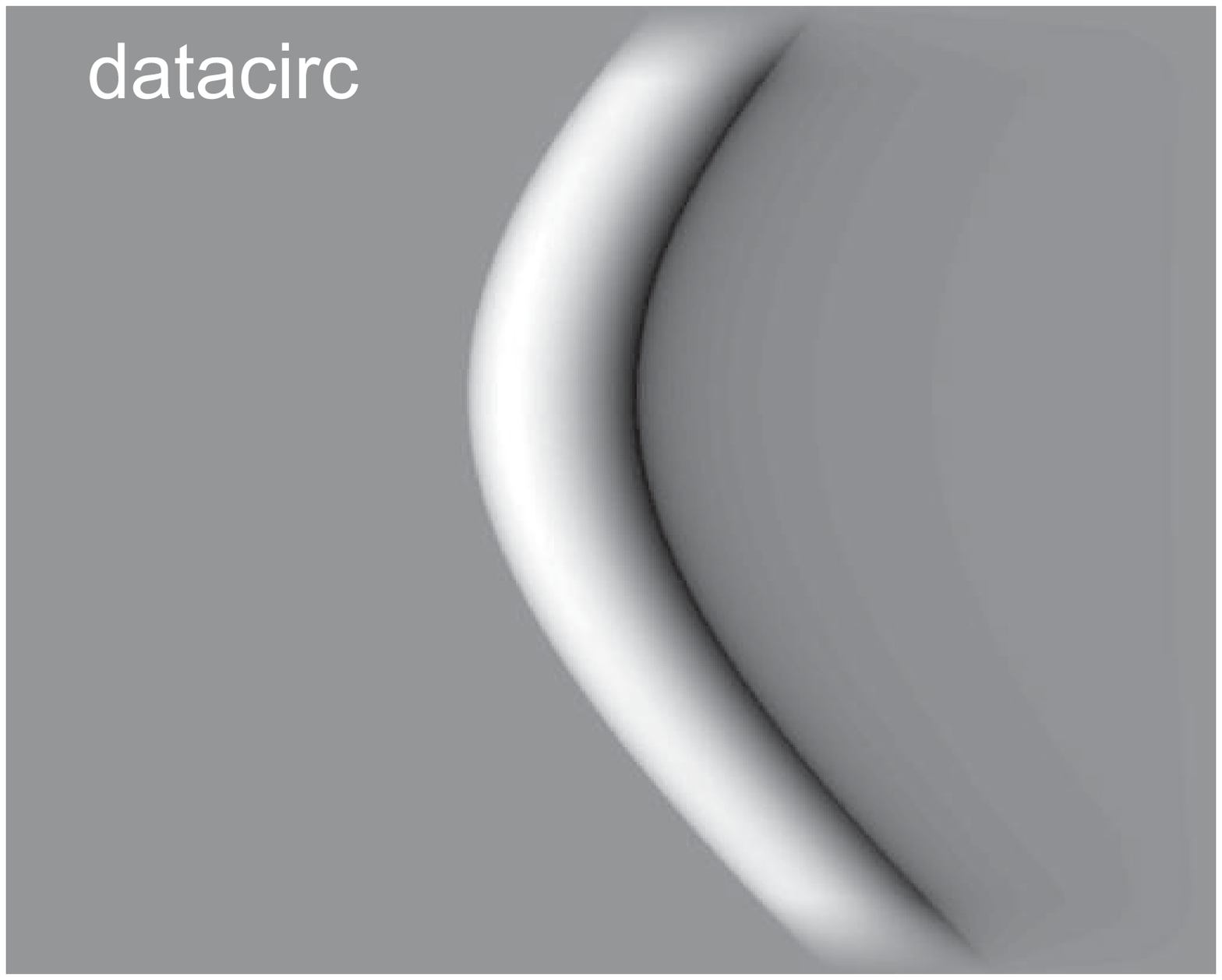}\\[1em]
\includegraphics[width=0.325\textwidth,height=0.3\textwidth]{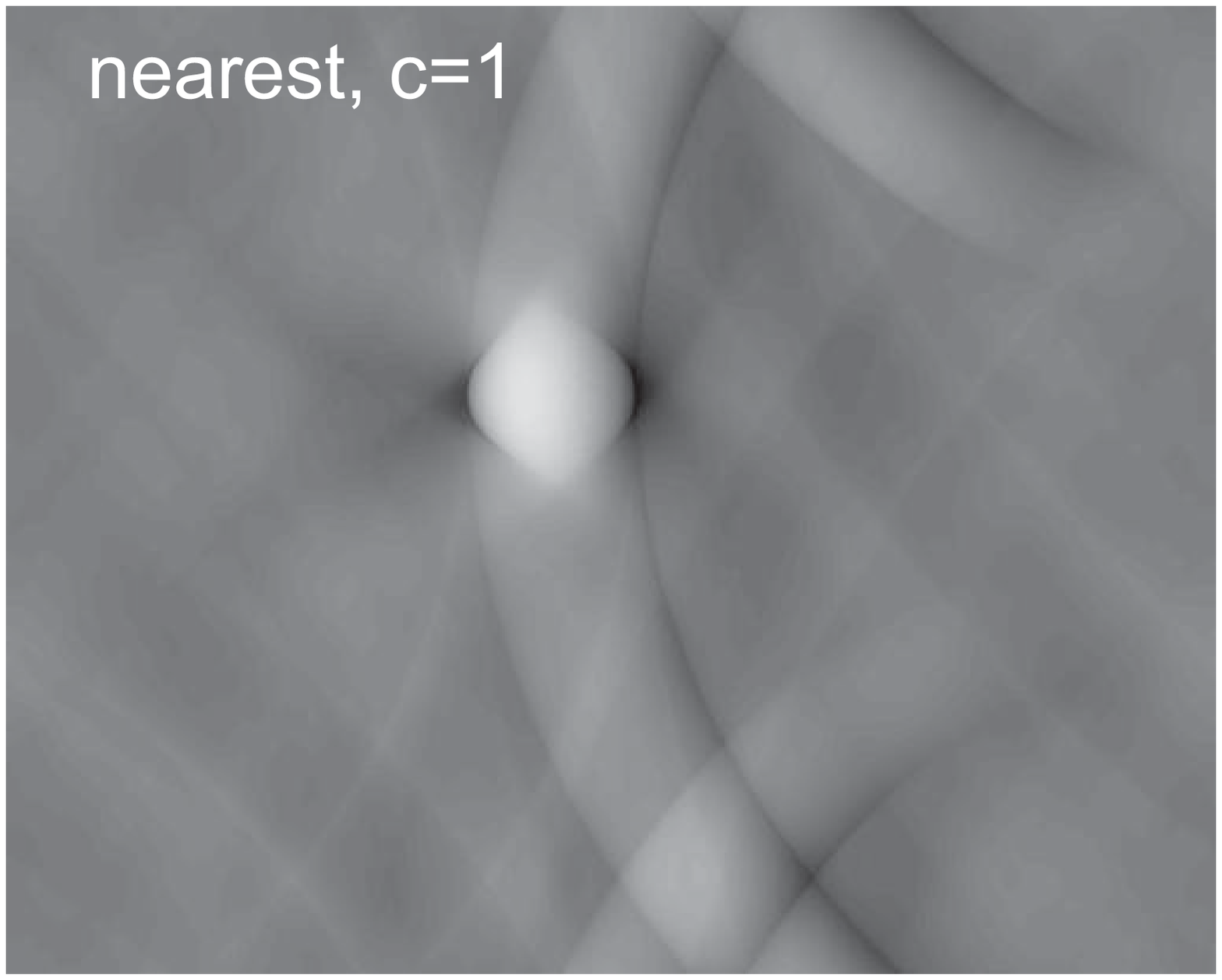}\quad
\includegraphics[width=0.325\textwidth,height=0.3\textwidth]{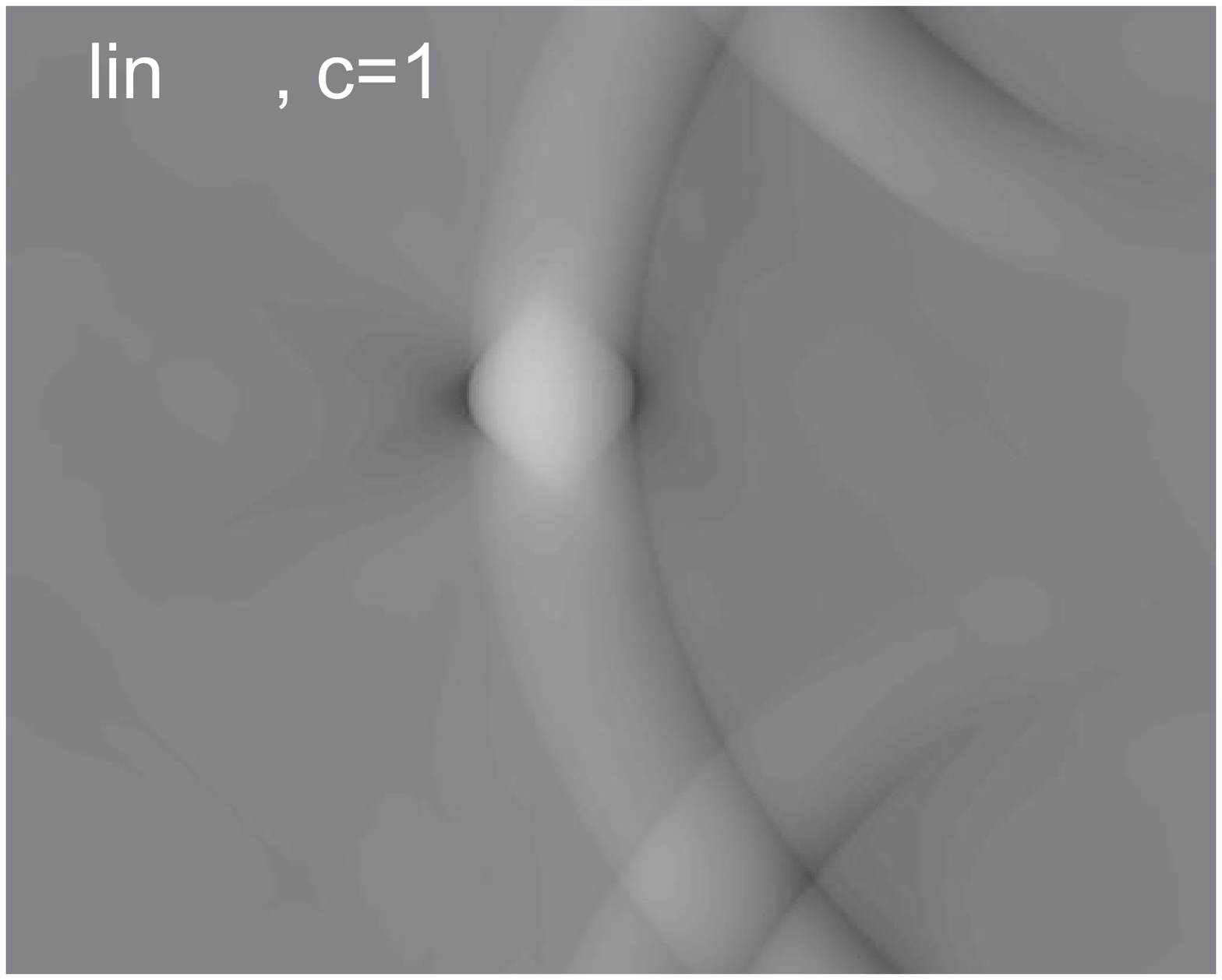} \\[1em]
\includegraphics[width=0.325\textwidth,height=0.3\textwidth]{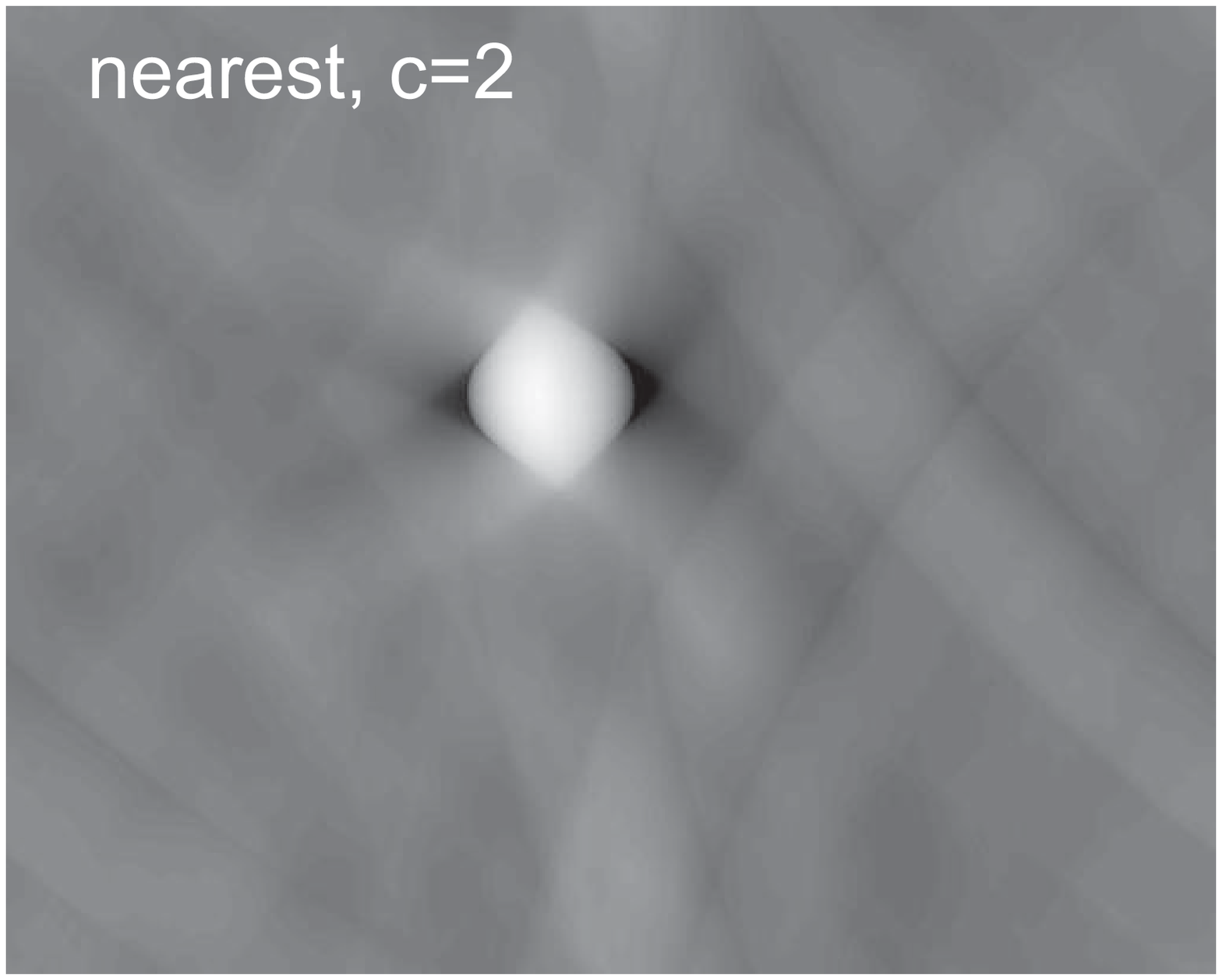}\quad
\includegraphics[width=0.325\textwidth,height=0.3\textwidth]{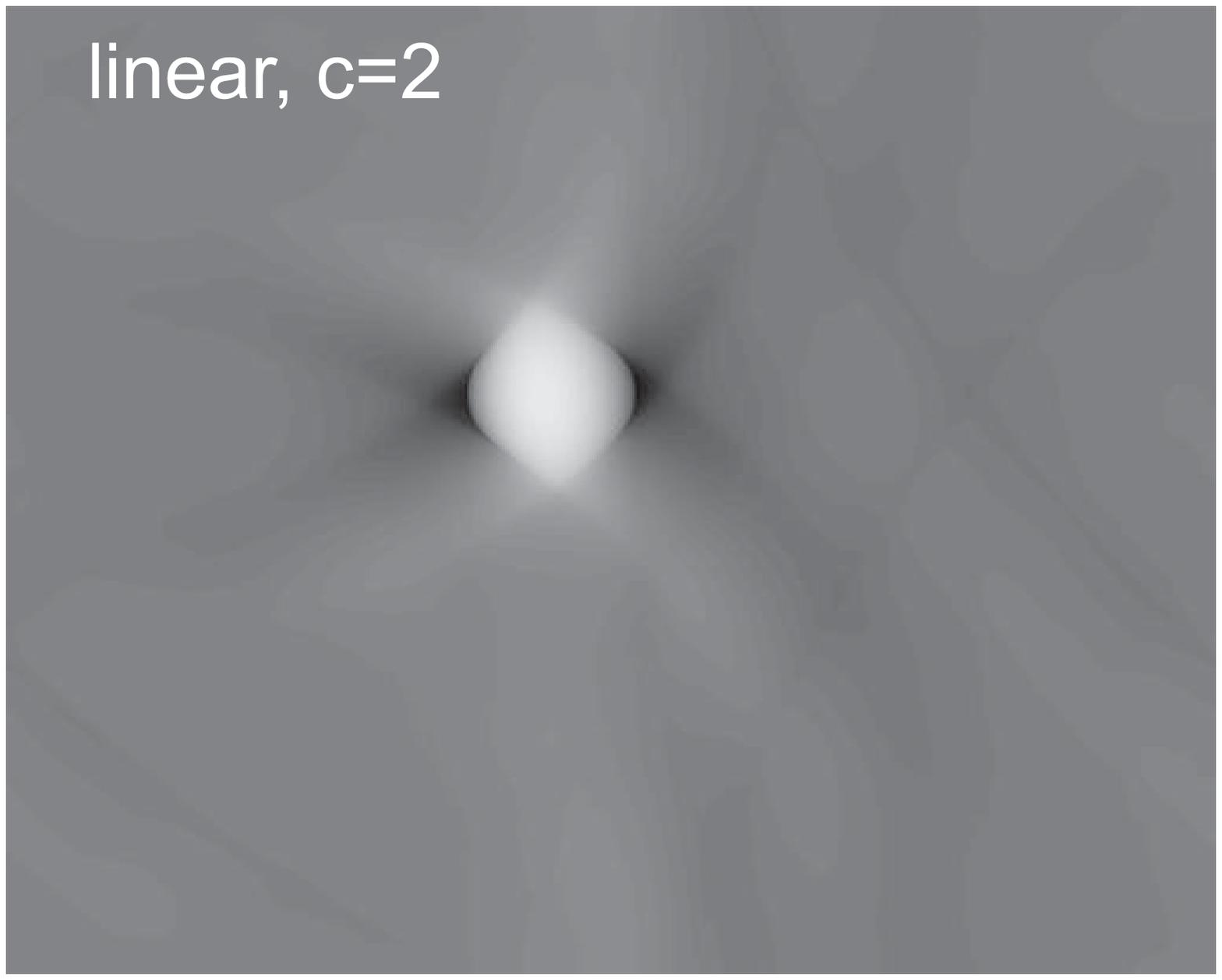}
\caption{\textbf{Reconstruction  with interpolation based Fourier algorithms.}
White corresponds to function value 1, black to  function value -0.4.
\emph{Top Line:} Phantom and analytic data.
\emph{Middle line:} Reconstruction without oversampling ($c=1$).
\emph{Bottom line:} Reconstruction with  oversampling  ($c=2$).}
\label{fg:fcirc-1}
\end{figure}

\begin{figure}[t!]
\centering
\includegraphics[width=0.325\textwidth,height=0.3\textwidth]{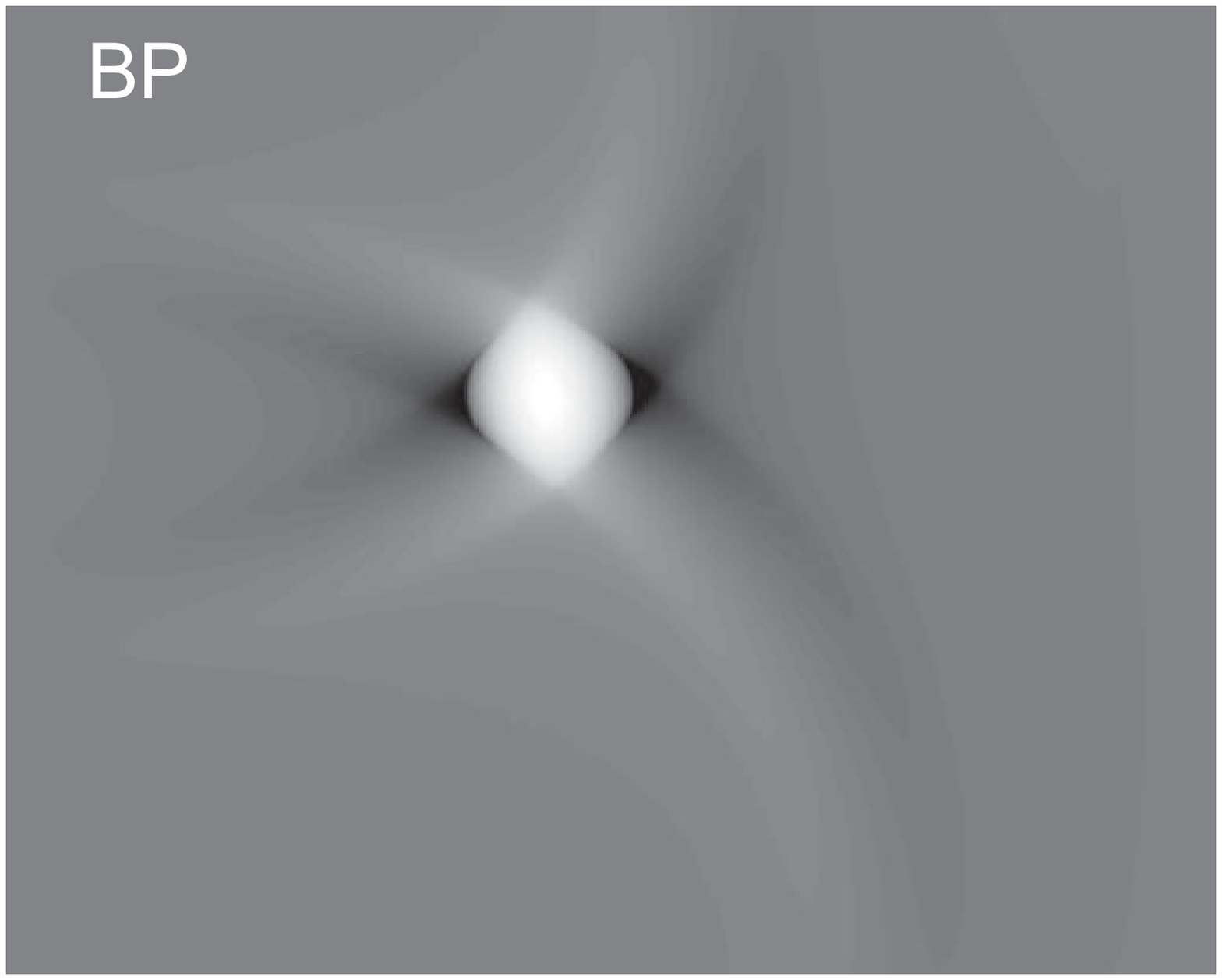}\quad
\includegraphics[width=0.325\textwidth,height=0.3\textwidth]{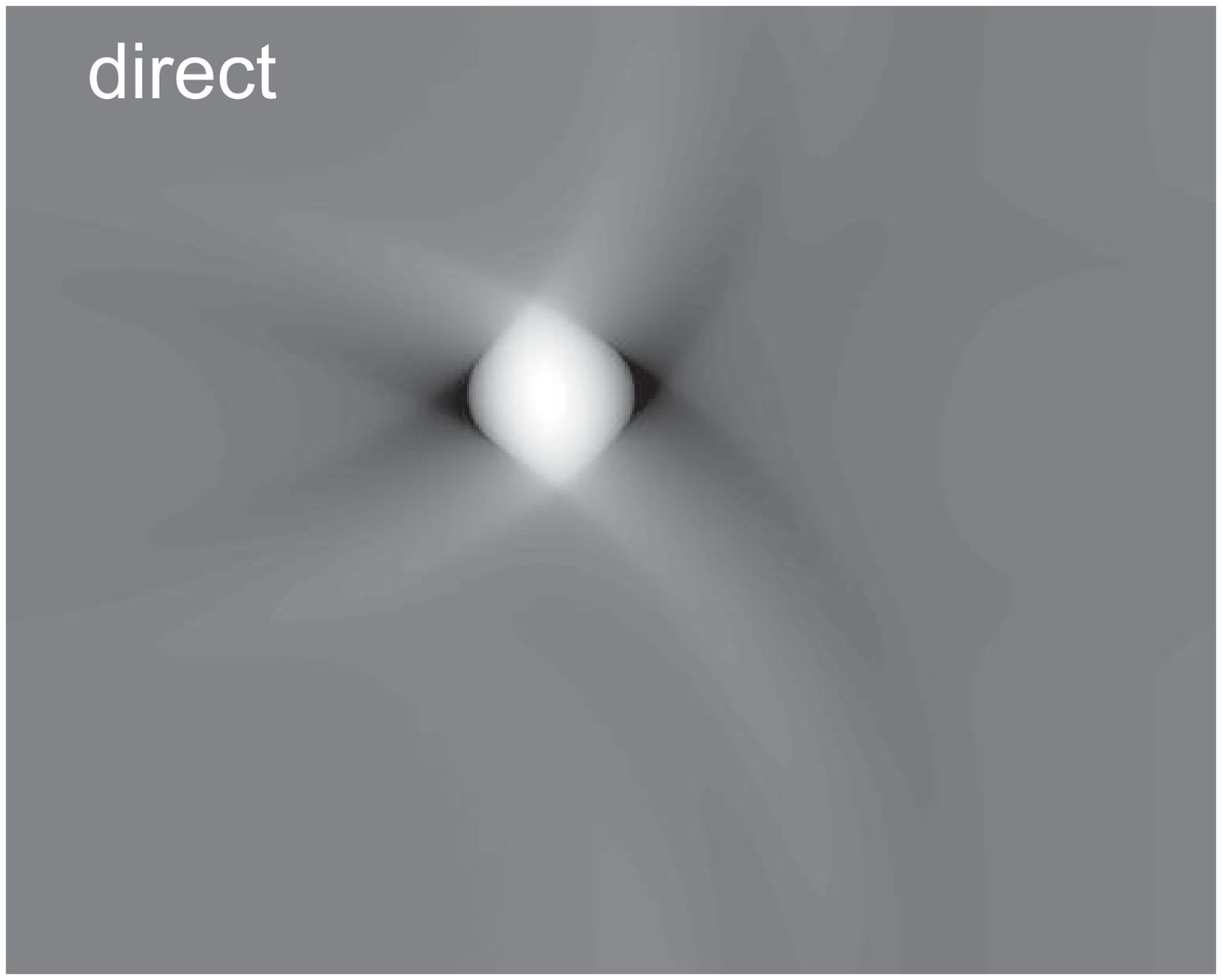} \\[1em]
\includegraphics[width=0.325\textwidth,height=0.3\textwidth]{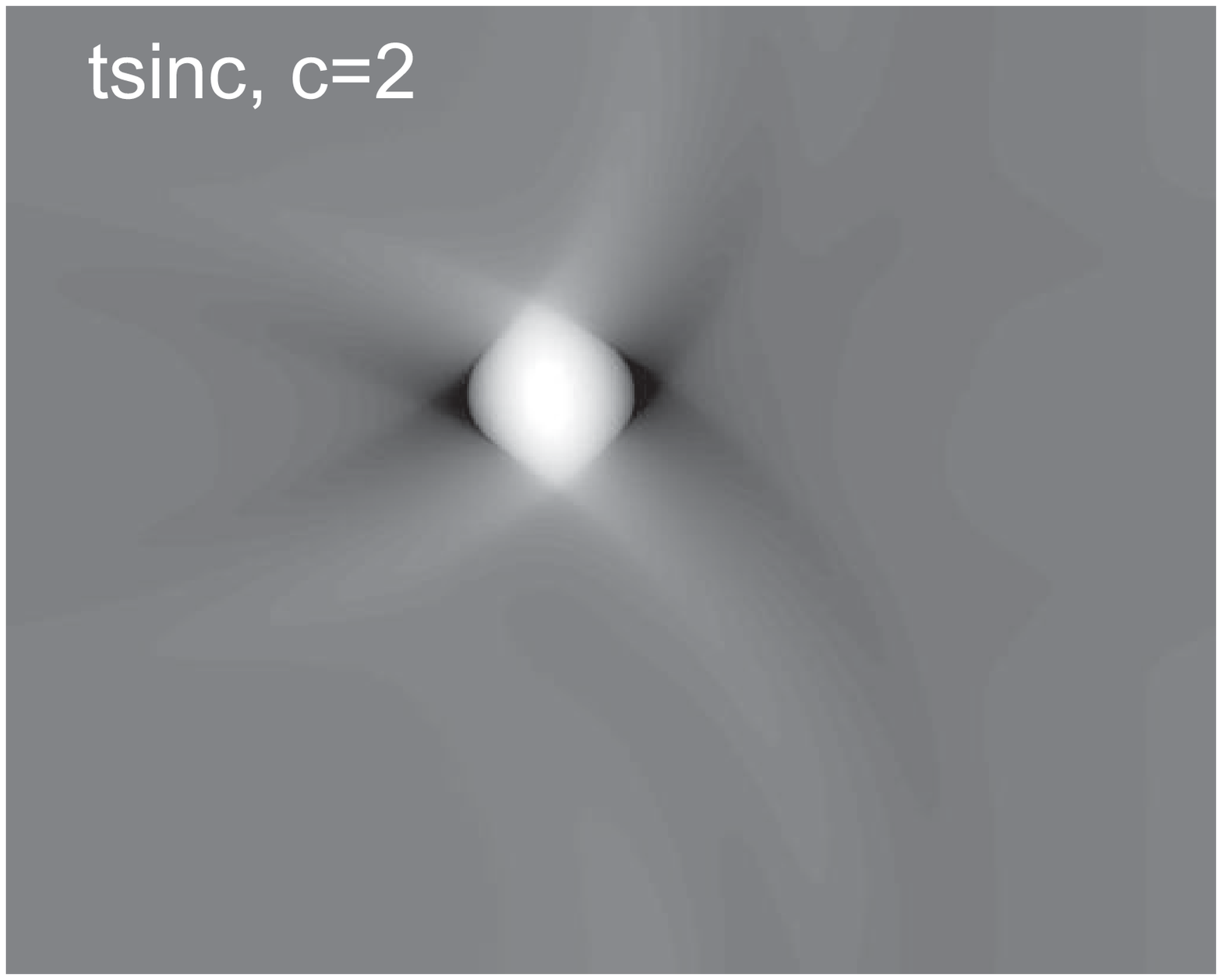}\quad
\includegraphics[width=0.325\textwidth,height=0.3\textwidth]{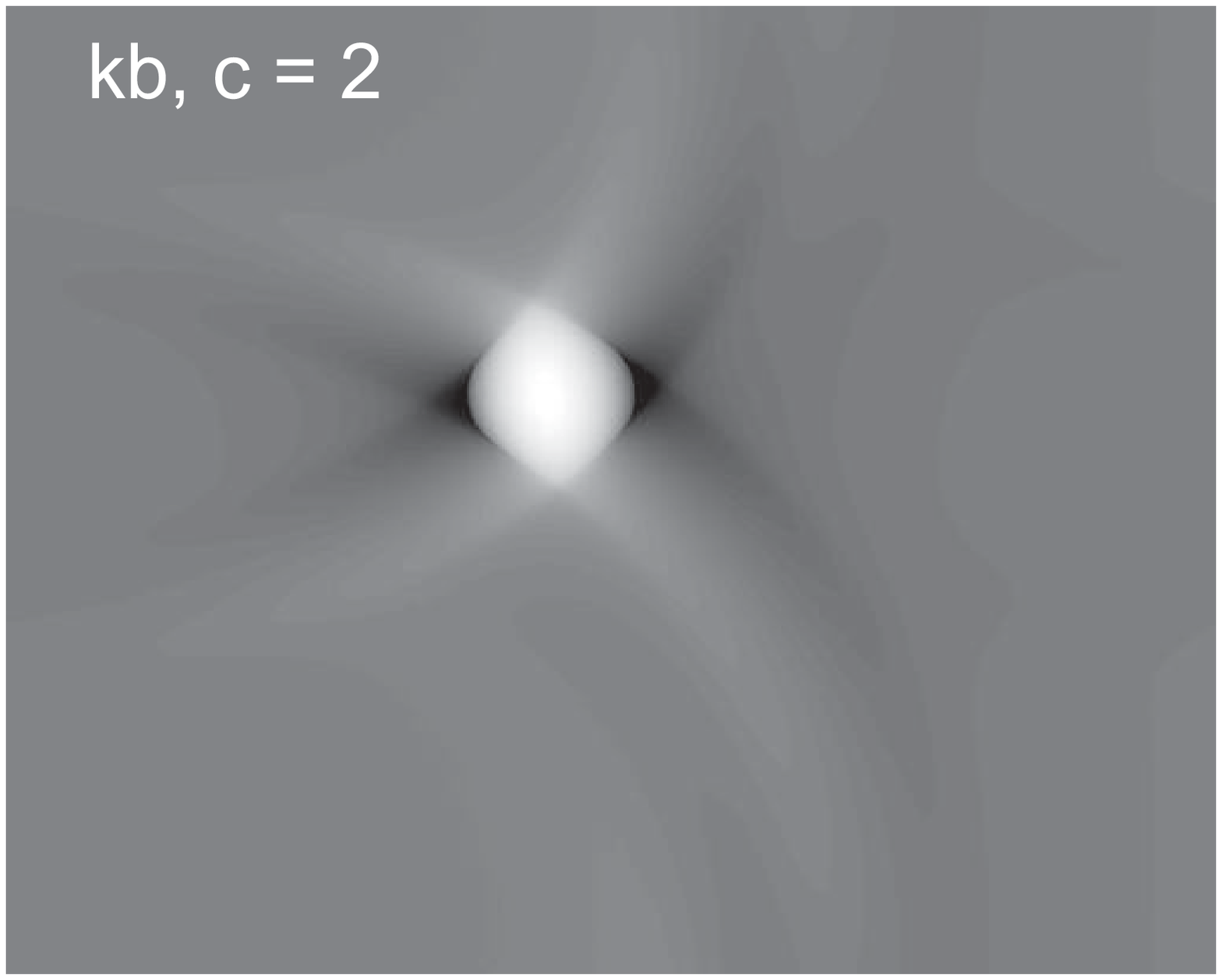} \\[1em]
\includegraphics[width=0.325\textwidth,height=0.3\textwidth]{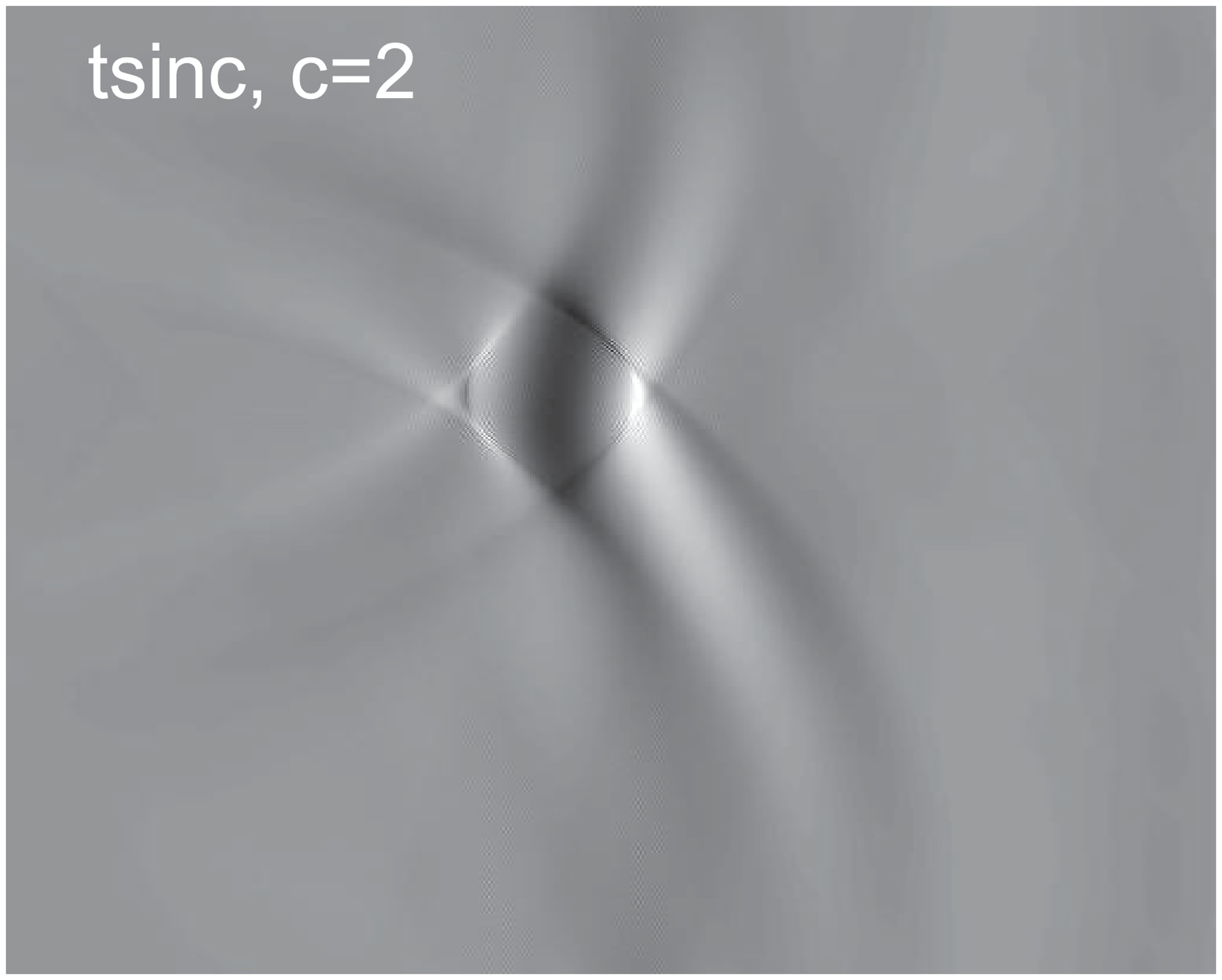} \quad
\includegraphics[width=0.325\textwidth,height=0.3\textwidth]{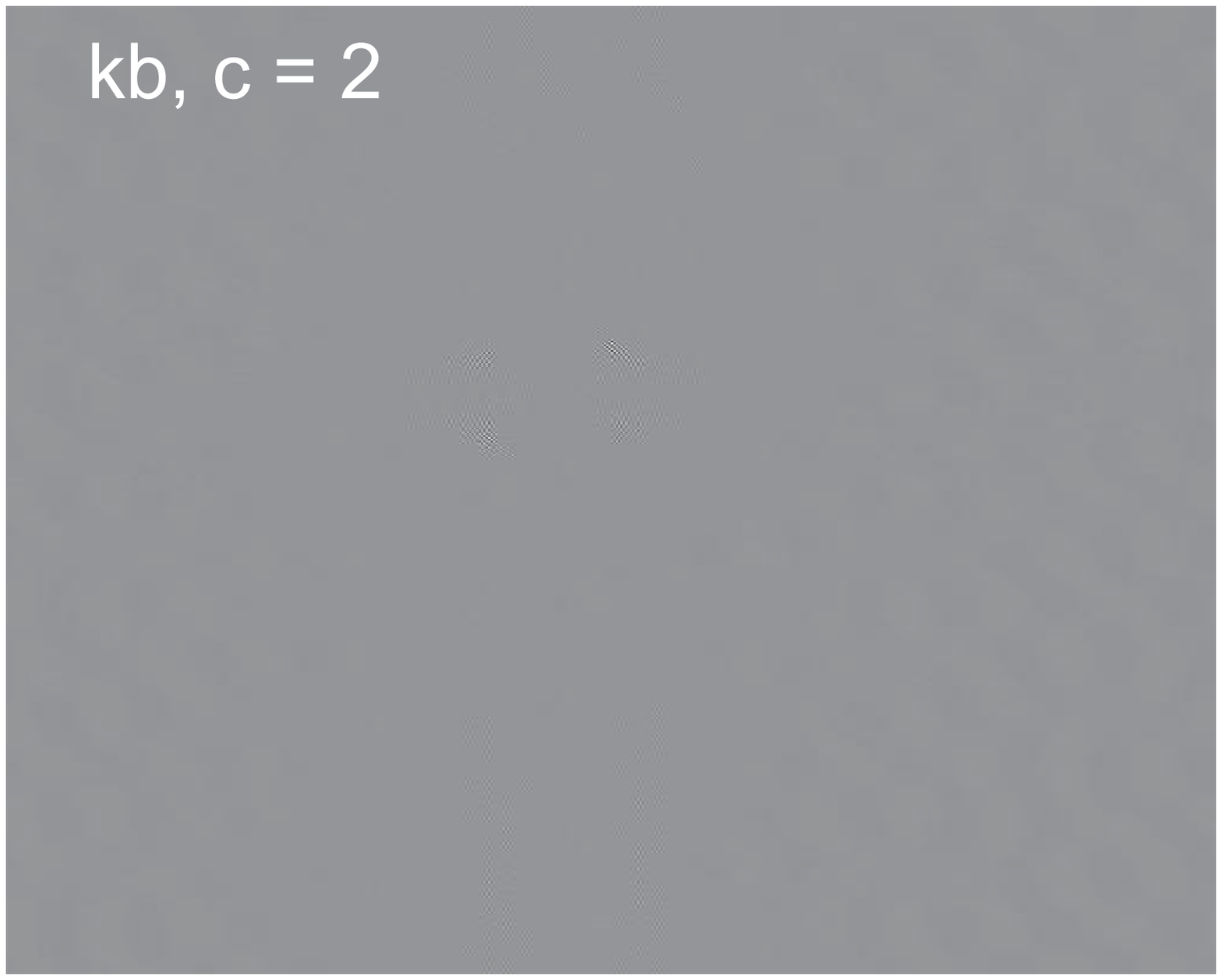}
\caption{\textbf{Improved reconstructions.}
\emph{Top Line:} Back projection (left) and direct reconstruction (right).
\emph{Middle Line:} Truncated $\sinc$  (left) and  nonuniform FFT based
Fourier algorithm (right). Here white corresponds to function value 1, black to  function value -0.4.
\emph{Bottom Line:}
Difference images between direct and truncated $\sinc$  reconstruction (left),
and direct and nonuniform FFT based reconstruction (right).
Here white (resp. black) corresponds to function value $0.04$ (resp. $-0.04$).
} \label{fg:fcirc-2}
\end{figure}

\subsection{Circular shaped object}

As first case example we use a circular shaped object
\[
    f_{\rm circ}(\f x) = \frac{2}{a}  \left\{
                  \begin{array}{ll}
                    \bigl(a^2 - \abs{\f x-\f x_0}^2\bigr)^{1/2},
                    & \text{ if } \abs{\f x-\f x_0} < a\,, \\
                    0, & \text{ otherwise}\,,
                  \end{array}
                \right.
\]
centered at $\f x_0 := (x_0, y_0)$ (see top left image in Figure \ref{fg:fcirc-1}). For such a simple object reconstruction
artifacts can be identified very clearly. Moreover, the data $\Qo f_{\rm circ}$ can be
evaluated analytically (see \cite[Equation (B.1)]{BurBauGruHalPal07}) as
\begin{equation*}
    ( \Qo f_{\rm circ}) ( x, t)
    =
    \frac{1}{a} \ \mathrm{Re}  \left[ (s_+ - s_-) -  t \log \left( \frac{s_+ + ( t + a_i)}{s_- + (t - a_i) } \right) \right] .
\end{equation*}
Here $s_\pm := ( (t \pm a)^2 + \abs{(x, 0)-\f x_0}^2  )^{1/2}$,
$\log(\cdot)$ is the principal branch  of the complex logarithm, and
$\mathrm{Re} [z]$ denotes the real part of complex number $z$. The
reconstruction results are depicted in Figures \ref{fg:fcirc-1} and
\ref{fg:fcirc-2}. Table \ref{tb:performance} and Figure \ref{fg:performance}
compare run times with the relative $\ell^2$-error
 \[
 \frac{\norm{\f f - \f f^\dag}_{\ell^2} }{ \norm{\f f^\dag}_{\ell^2} }
 =
\frac{\left( \sum_{m,n}(f_{m,n} - f^\dag_{m,n})^2 \right)^{1/2} }
{\left( \sum_{m,n}(f^\dag_{m,n})^2 \right)^{1/2}} \,,
 \]
were $ \f f^\dag =(f^\dag_{m,n})$ denotes the discrete image  obtained by
direct Fourier reconstruction. Run times were measured for Matlab
implementations on a personal computer with 2.4 GHz Athlon processor.

In order to demonstrate the stability of the Fourier algorithms, we also performed
reconstructions from noisy data, where Gaussian noise was added with a variance equal to $20\%$ of
the maximal data value. The reconstruction results are depicted in  Figure
\ref{fg:fcirc-3}.

\begin{table}[b!]
\centering
\begin{tabular}{l | @{\hspace{1cm}} c  @{\hspace{1cm}} c  @{\hspace{0.5cm}} c  }
\toprule
&    $c$  & $\ell^2$-error & runtime (sec)  \\
\midrule
back projection                       & -                  & -   & 88.9         \\
direct reconstruction       & -                  & -  & 54.1          \\
nearest neighbor            & 1                       & 0.75\phantom{6}         & 0.65       \\
nearest neighbor            & 2                       & 0.40\phantom{6}         & 0.85       \\
linear                      & 1                     & 0.65\phantom{6}         & 0.8        \\
linear                      & 2                     & 0.21\phantom{6}         & 0.95        \\
Truncates $\sinc$           & 2                        & 0.04\phantom{6}       & 1.6        \\
Kaiser Bessel               & 2                      & 0.006                   & 1.6        \\
\bottomrule
\end{tabular}
\caption{Run times and error  of different reconstruction methods.} \label{tb:performance}
\end{table}

\begin{psfrags}
\psfrag{linear}{linear}
\psfrag{Kaiser Bessel}{Kaiser Bessel}
\psfrag{nearest}{nearest}
\psfrag{runtime}{\hspace{-0.03\textwidth}reconstruction time}
\psfrag{log}{\hspace{-0.03\textwidth}$\log_{10}\bigl( \norm{\f f - \f f^\dag}_{\ell^2} \bigl/ \norm{\f f^\dag}_{\ell^2} \bigr)$}
\psfrag{10}{}
\begin{figure}[t!]
\centering
\includegraphics[width=0.85\textwidth]{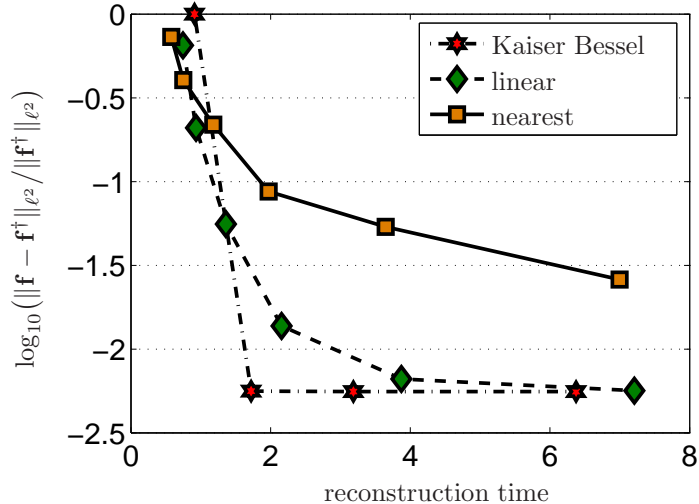}
\caption{\textbf{Reconstruction time versus error.}
The points on the graphs  belong to runtimes and errors for reconstruction with
oversampling factors $c \in \set{ 1, 2, 4, 8, 16, 32}.$}
\label{fg:performance}
\end{figure}
\end{psfrags}

\begin{figure}[t!]
\centering
\includegraphics[width=0.325\textwidth,height=0.3\textwidth]{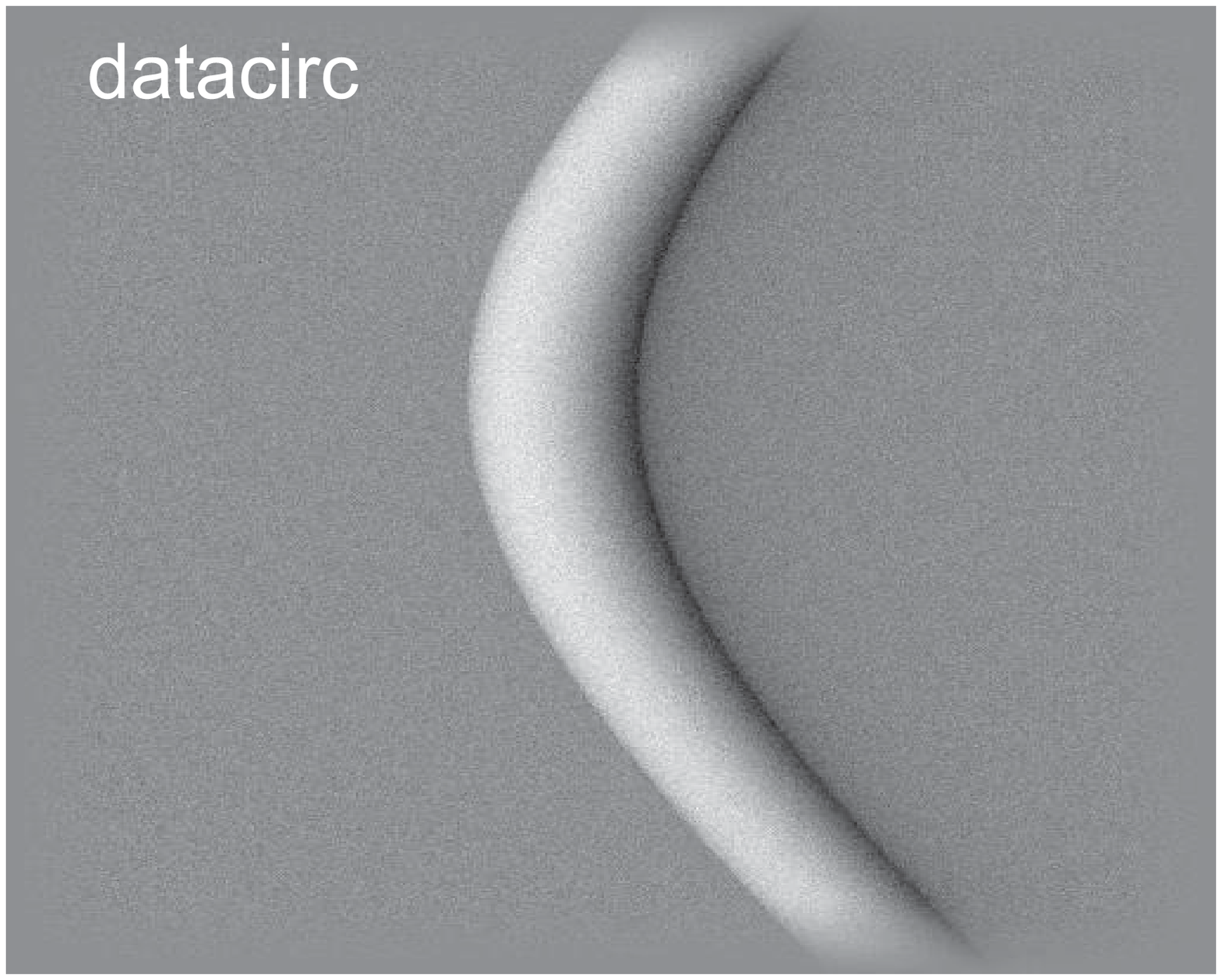} \quad\includegraphics[width=0.325\textwidth,height=0.3\textwidth]{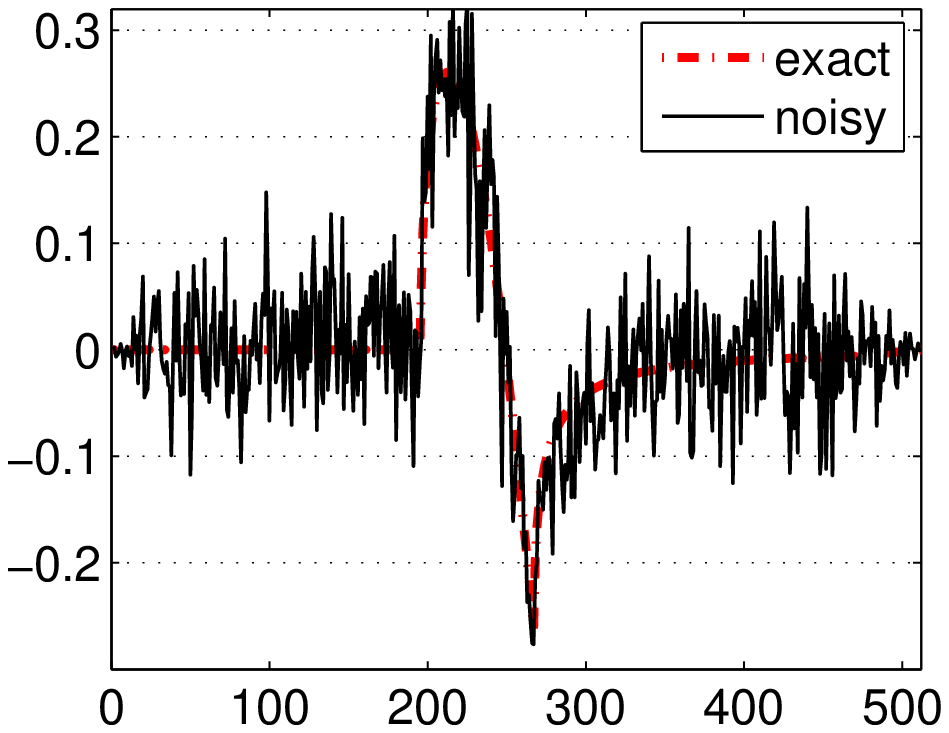}
\caption{ Noisy data used in Figure~\ref{fg:fcirc-3}}. 
\end{figure}

\begin{figure}[t!]
\centering
\includegraphics[width=0.325\textwidth,height=0.3\textwidth]{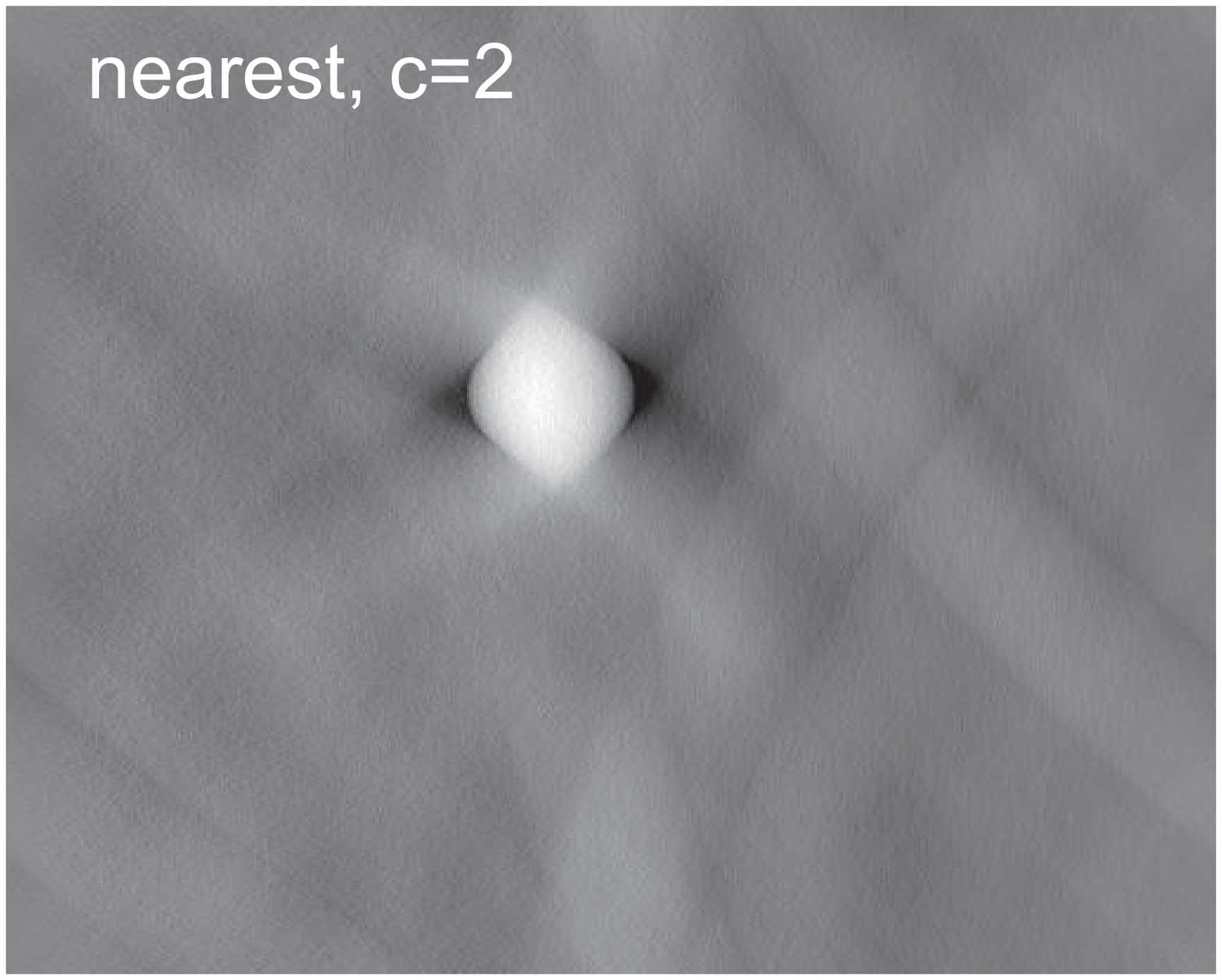}\quad
\includegraphics[width=0.325\textwidth,height=0.3\textwidth]{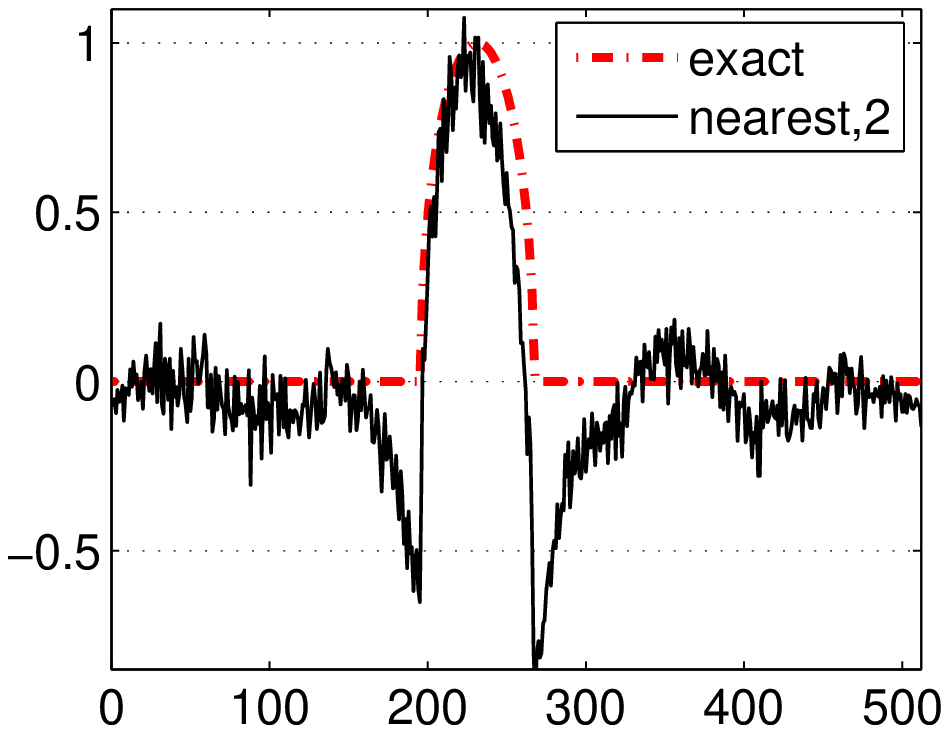}\\[1em]
\includegraphics[width=0.325\textwidth,height=0.3\textwidth]{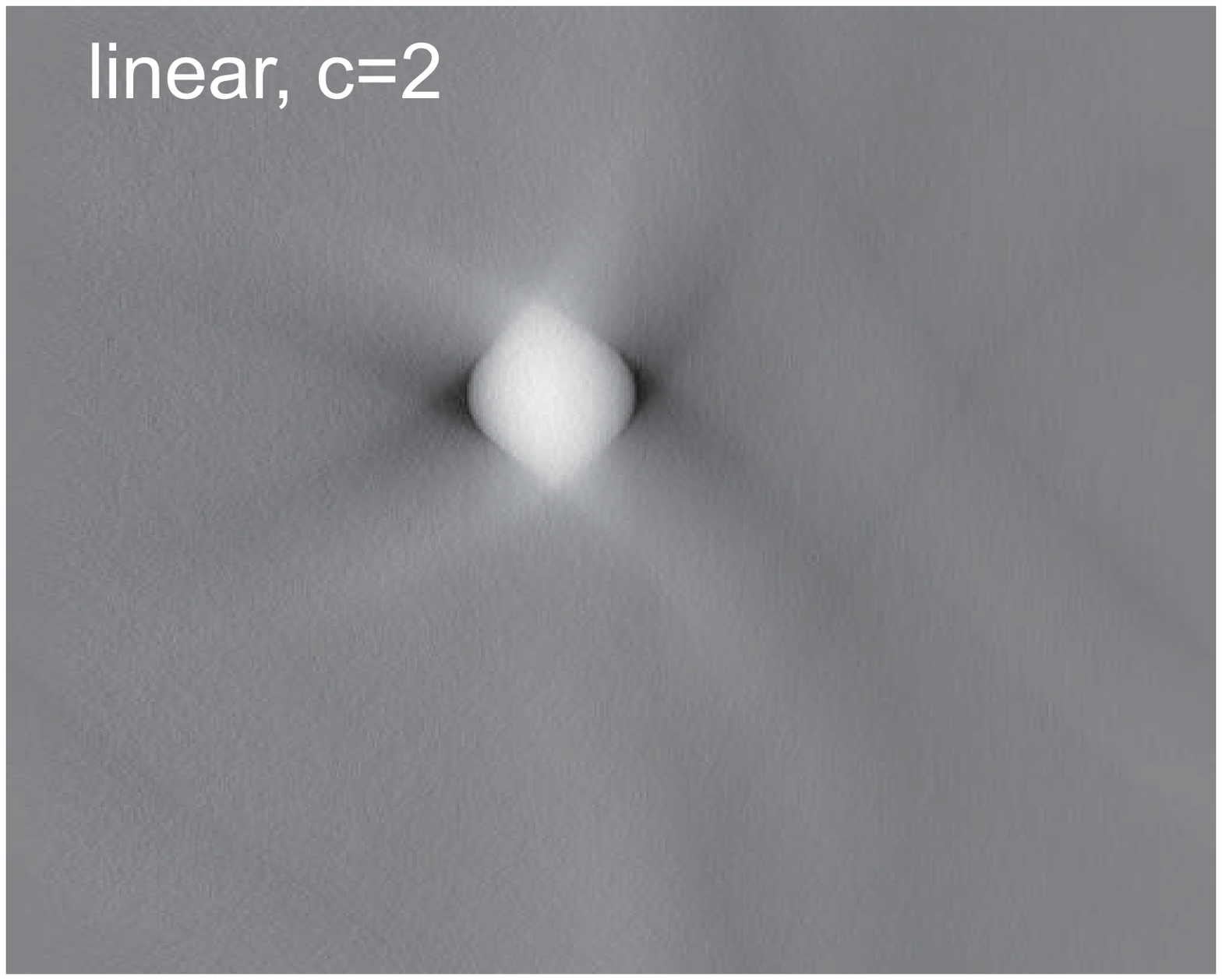}\quad
\includegraphics[width=0.325\textwidth,height=0.3\textwidth]{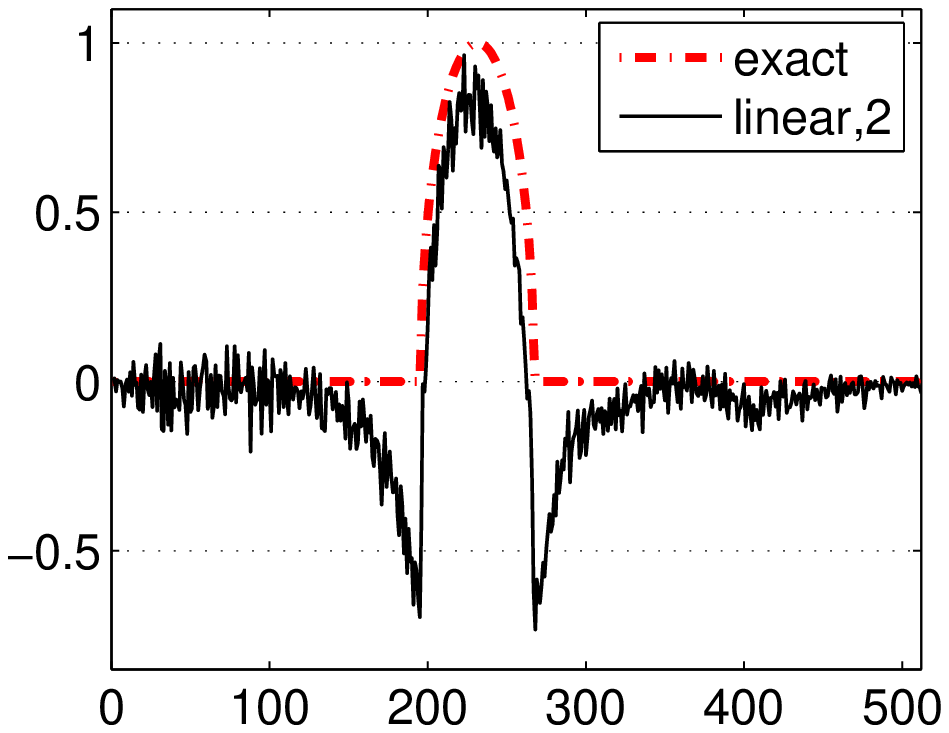}\\[1em]
\includegraphics[width=0.325\textwidth,height=0.3\textwidth]{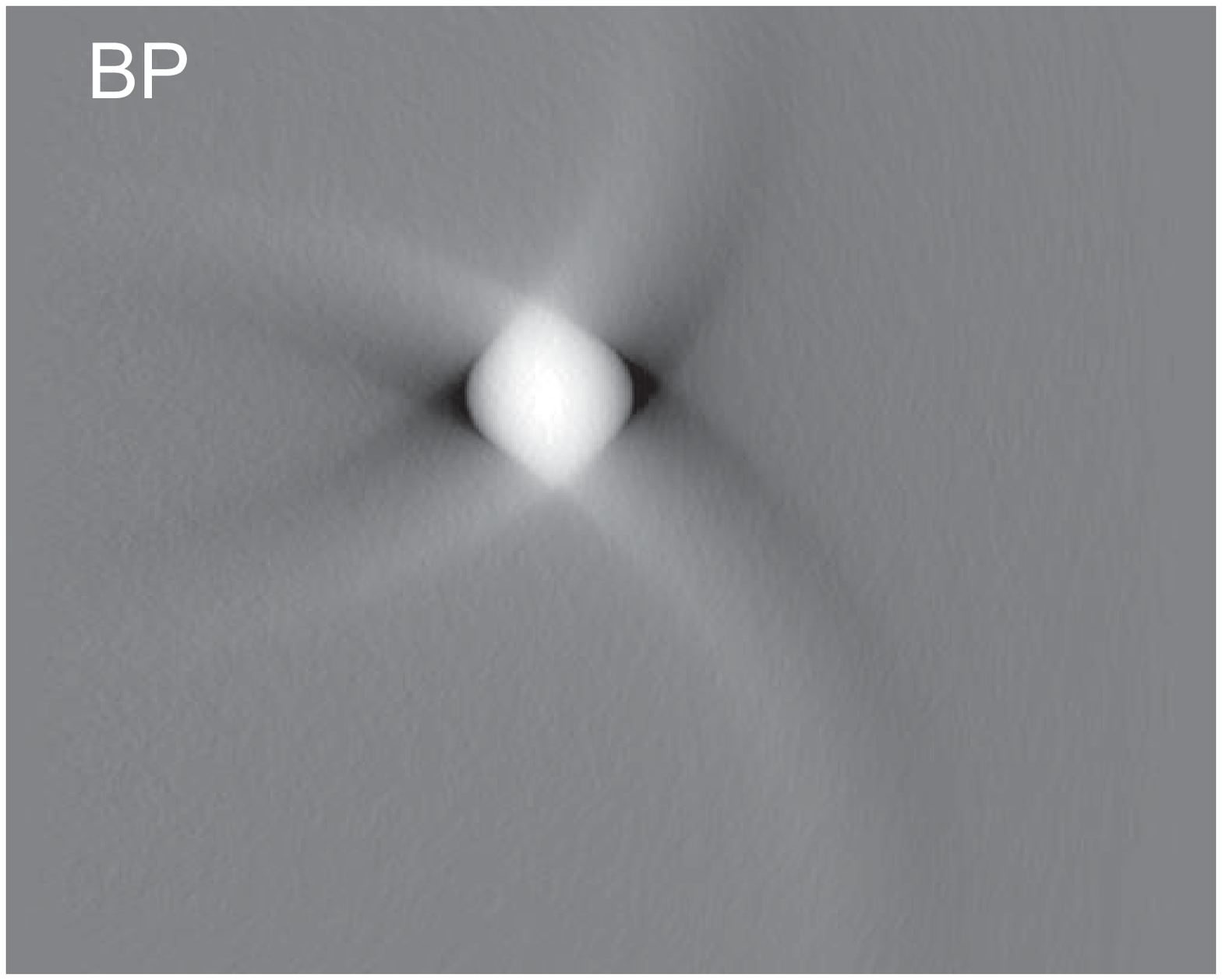}\quad
\includegraphics[width=0.325\textwidth,height=0.3\textwidth]{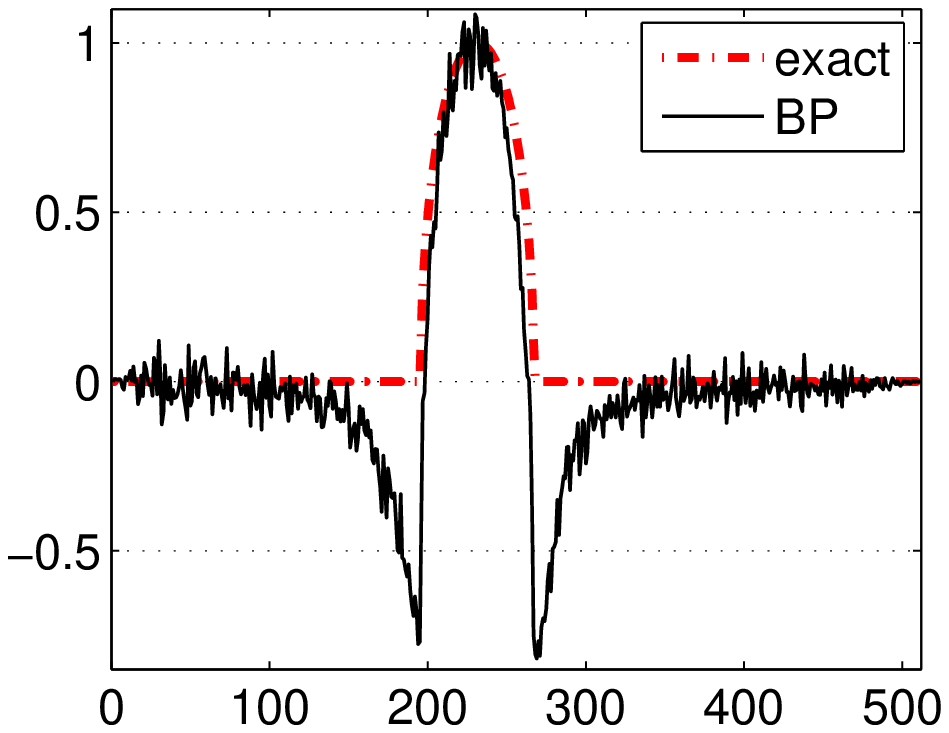}\\[1em]
\includegraphics[width=0.325\textwidth,height=0.3\textwidth]{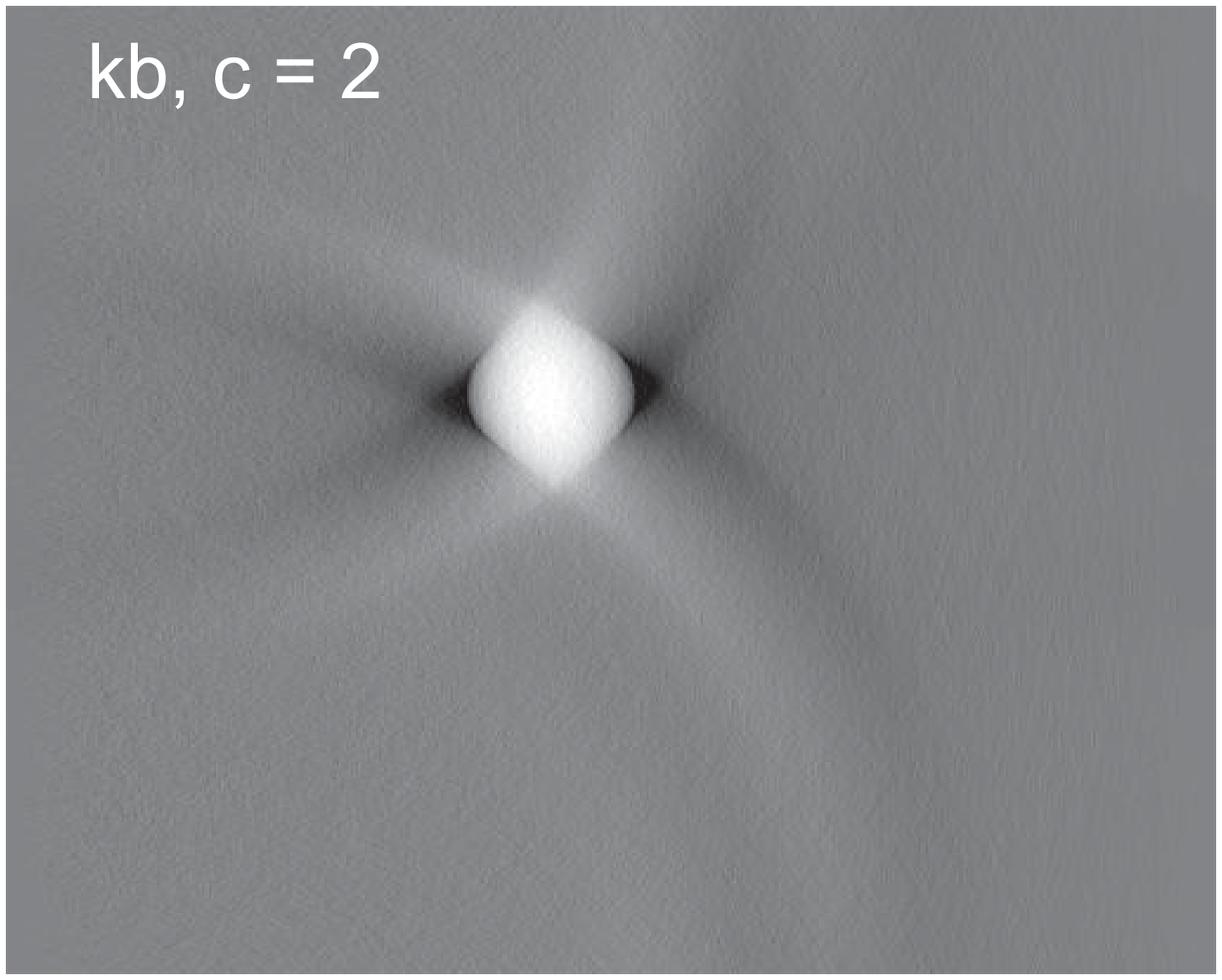}\quad
\includegraphics[width=0.325\textwidth,height=0.3\textwidth]{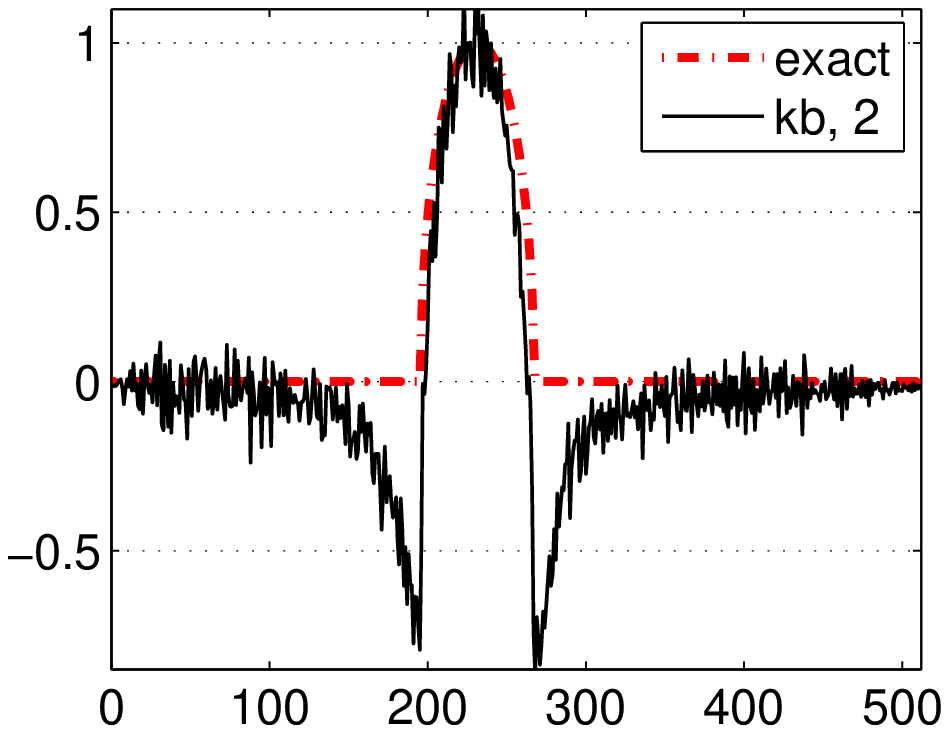}
\caption{ \textbf{Reconstruction from noisy data.}
\emph{Top  line:} Nearest neighbor interpolation based reconstruction ($c=2$).
\emph{Second line:} Linear interpolation based reconstruction ($c=2$).
\emph{Third line:} Reconstruction with back projection algorithm.
\emph{Bottom line:} Reconstruction with nonuniform FFT algorithm ($c=2$).
The horizontal profiles on the right are taken at $\set{x = x_0}$.}
\label{fg:fcirc-3}
\end{figure}

\begin{figure}[t!]
\centering
\includegraphics[width=0.325\textwidth,height=0.3\textwidth]{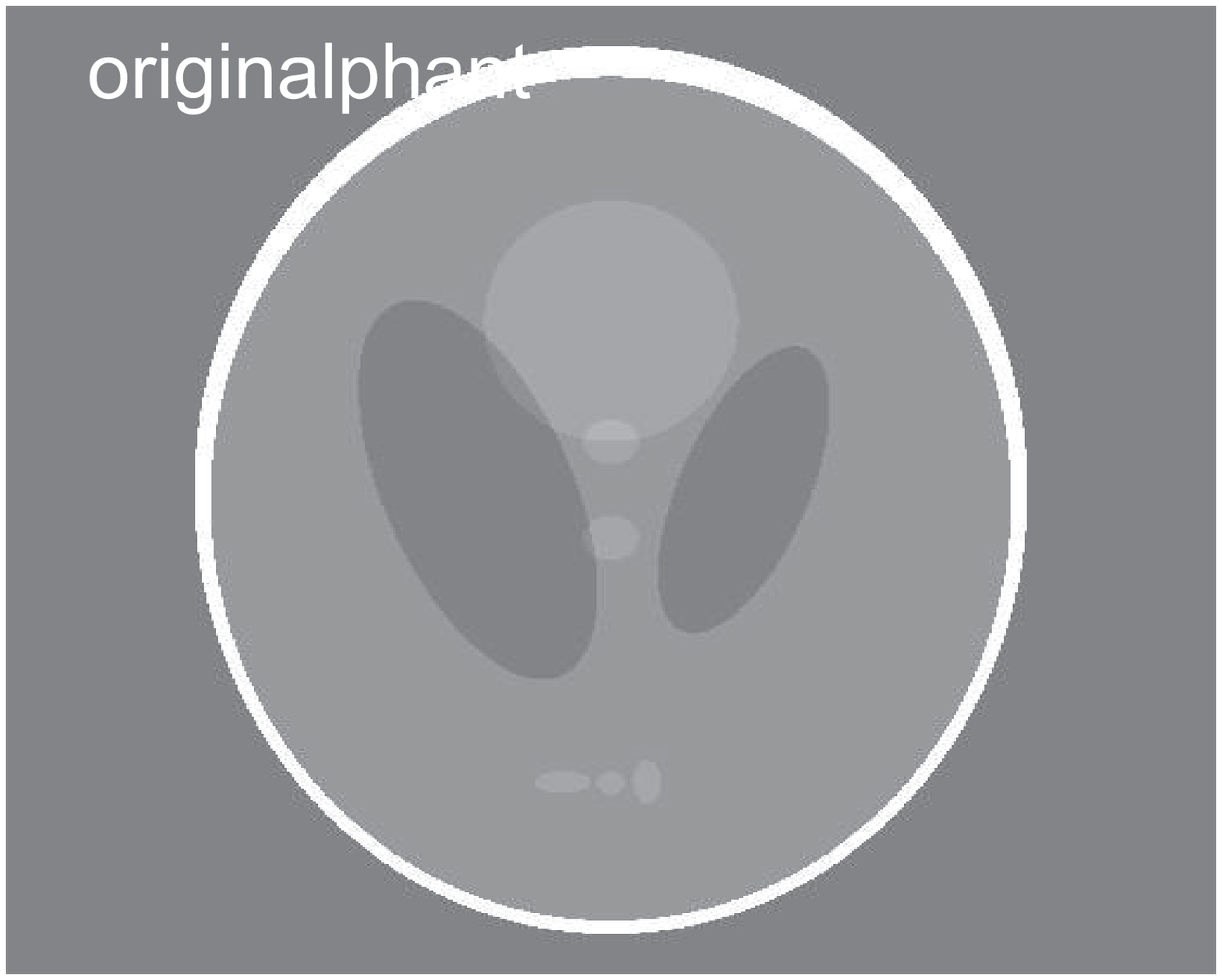}\quad
\includegraphics[width=0.325\textwidth,height=0.3\textwidth]{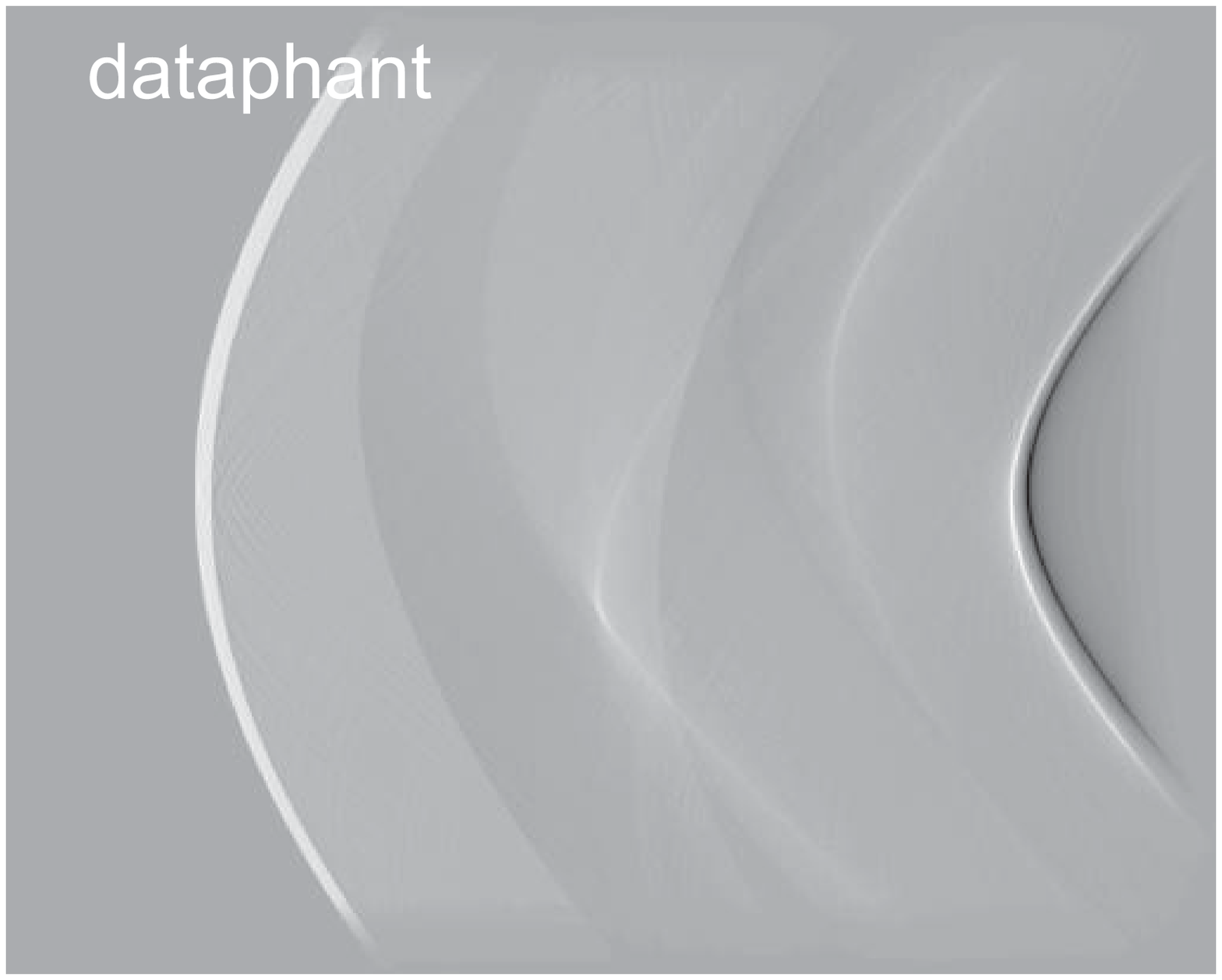}\\[1em]
\includegraphics[width=0.325\textwidth,height=0.3\textwidth]{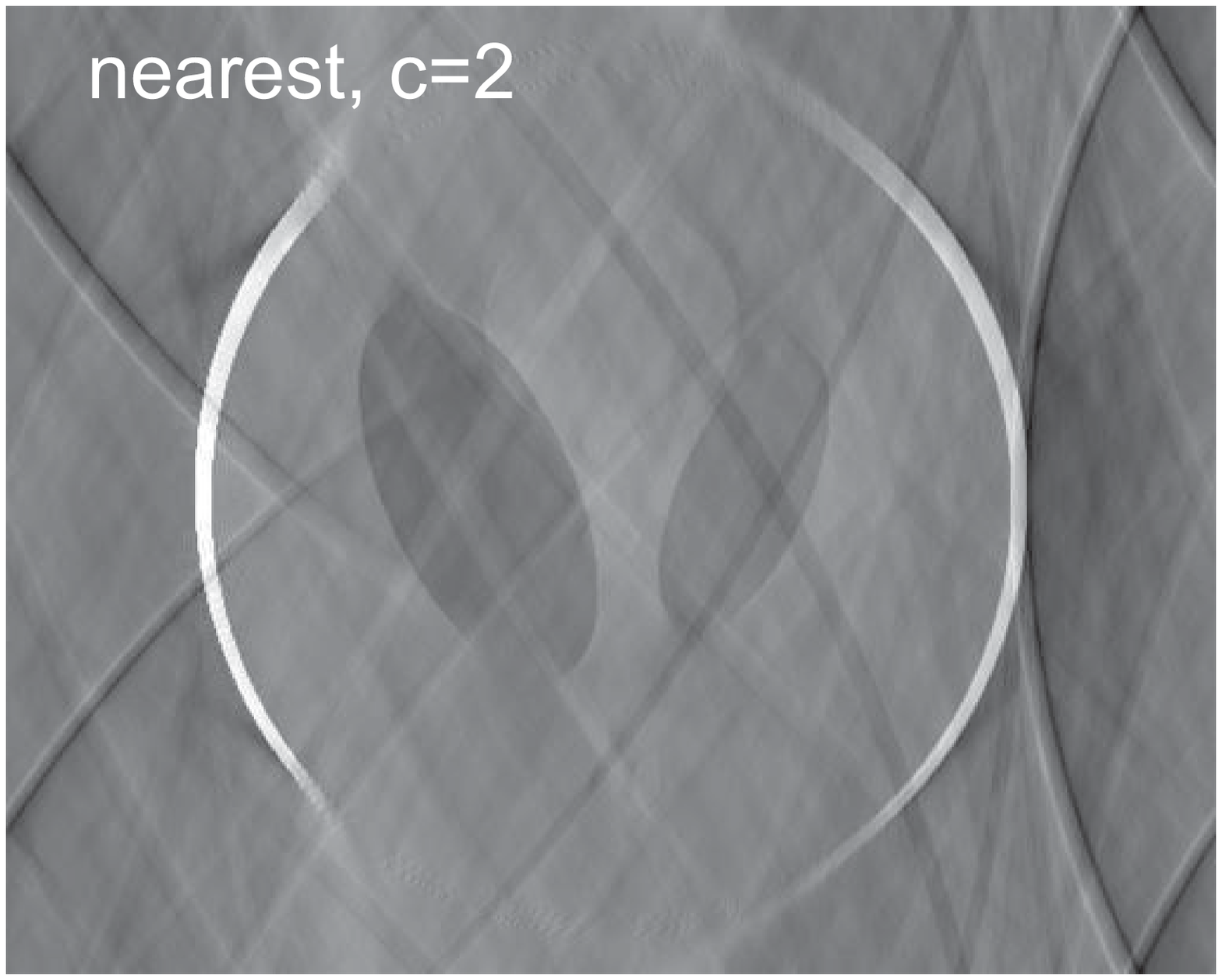}\quad
\includegraphics[width=0.325\textwidth,height=0.3\textwidth]{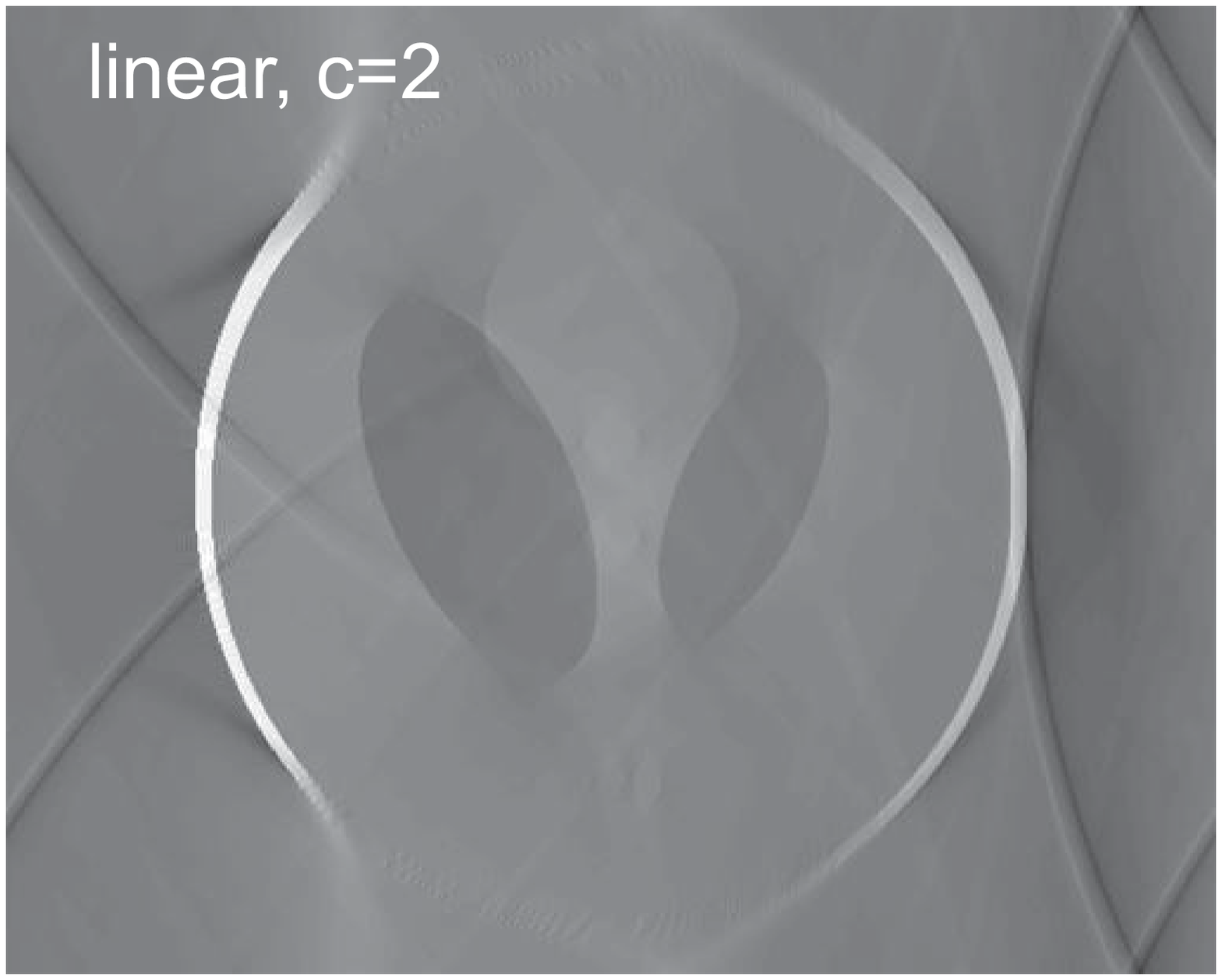}\\[1em]
\includegraphics[width=0.325\textwidth,height=0.3\textwidth]{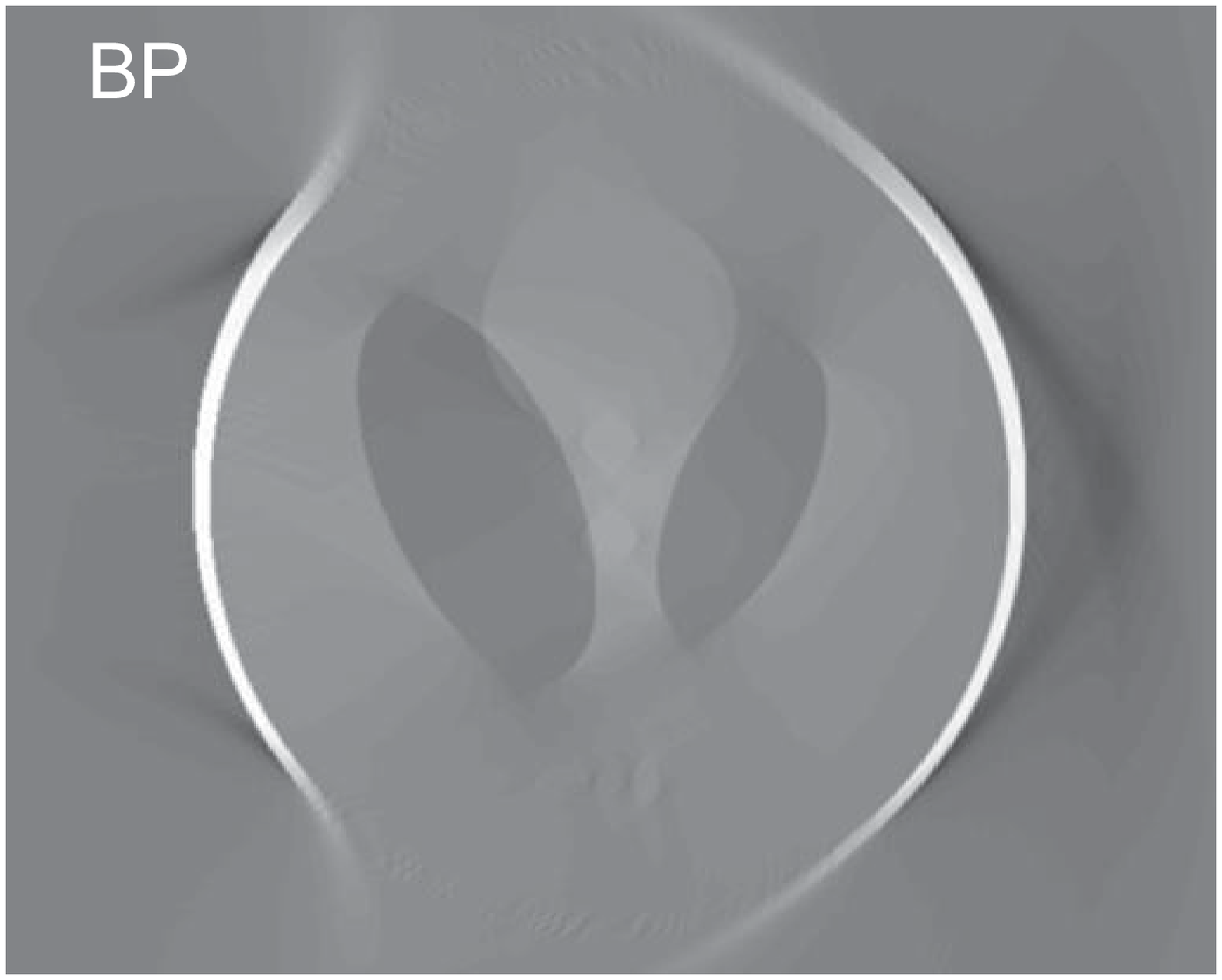}\quad
\includegraphics[width=0.325\textwidth,height=0.3\textwidth]{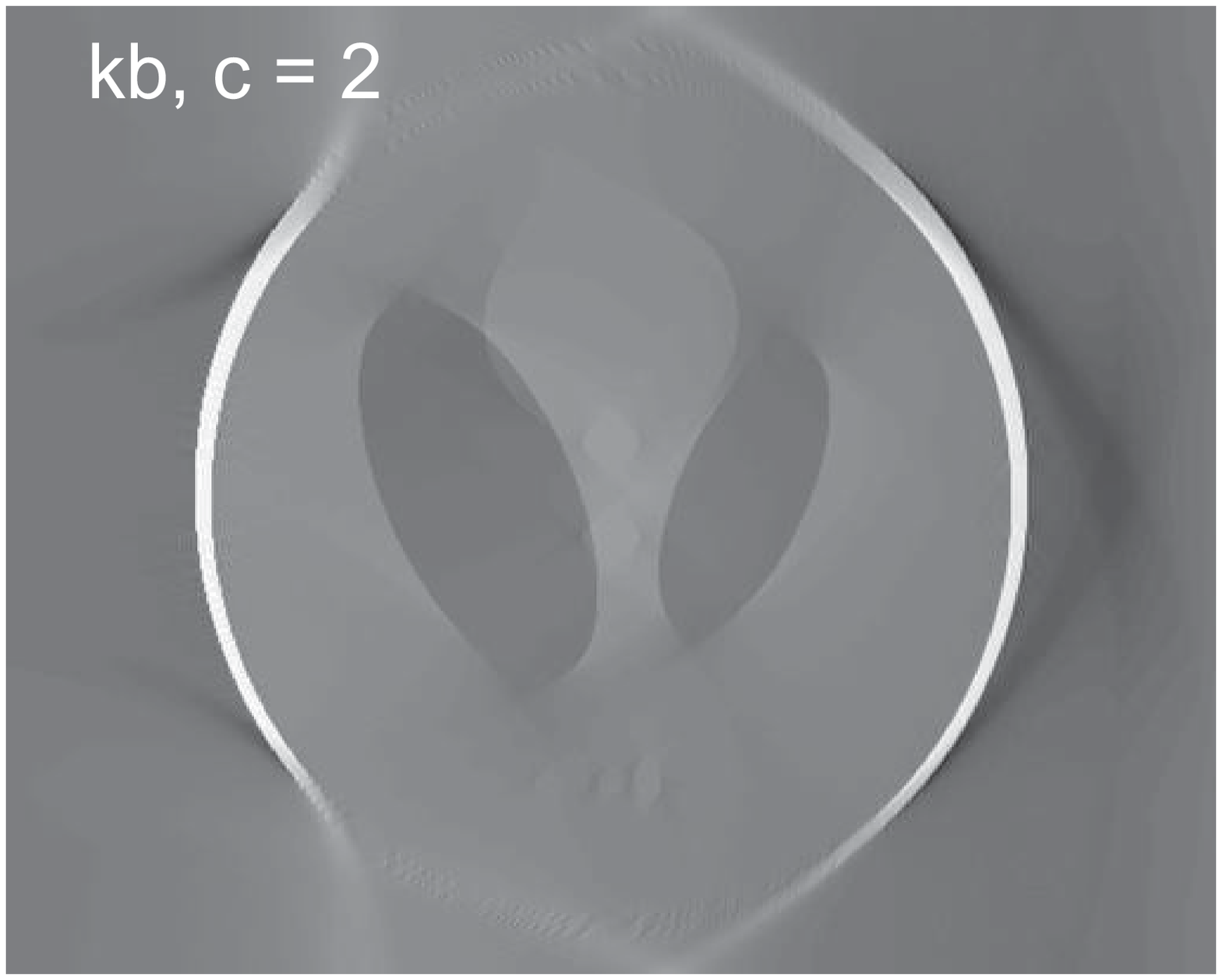}
\caption{\textbf{Reconstruction of Shepp Logan phantom.}
\emph{Top line:} Phantom and simulated data.
\emph{Second line :} Interpolation based reconstruction.
\emph{Bottom line:} Reconstruction with back projection algorithm (left) and
proposed nonuniform FFT algorithm (right).}
\label{fg:fphant}
\end{figure}

\subsection{Shepp--Logan  phantom}

In the next example we consider the Shepp--Logan  phantom
$f_{\rm phant}$, which is shown in top left image in Figure
\ref{fg:fphant}. The data were calculated numerically by
implementing d'Alemberts formula \cite{CouHil62},
\begin{equation*}
    (\Qo f_{\rm phant}) ( x, 0, t )
     =
    \partial_t \int_{0}^{t} \frac{ r (  \Mo f_{\rm phant})(x,0,r) }{\sqrt{t^{2}-r^{2}}} \ dr
\end{equation*}
with
\begin{equation*}
    (\Mo f_{\rm phant}) (x, 0, r) :=  \frac{1}{2\pi}
    \oint_{\abs{\sigma}  = 1} f_{\rm phant} \bigl( (x,0)+ r\sigma \bigr) d\sigma
\end{equation*}
denoting the spherical mean Radon transform of $f_{\rm circ}$.
The reconstruction results from simulated data are depicted in Figure
\ref{fg:fphant}.

\subsection{Discussion}

We emphasize that none of the above Fourier algorithms are designed to
calculate an approximation of $f$ but an approximation to the partial Fourier
reconstruction $f^\dag$ defined in \req{inv2}.
Therefore even in the direct reconstruction (top right image  in Figure \ref{fg:fcirc-2}) and in the
back projection reconstruction one can see some blurred boundaries in the reconstruction.
Such artifacts are expected using limited view data \req{data-part},
see \cite{LouQui00, XuYWanAmbKuc04}.

The results of interpolation based reconstruction without
oversampling ($c=1$) are quite useless. The reconstructions are
significantly improved by using a larger oversampling factor $c$.
However, even then, the results never reach the quality of the nonuniform FFT based
reconstruction.   Moreover, the numerical effort of linear interpolation
based reconstruction is proportional to the oversampling factor, which prohibits the use
of ``very large'' values for $c$ (see Figure  \ref{fg:performance}).
In the reconstruction with $c=2$ (bottom line in Figure \ref{fg:fcirc-1} and middle
line in Figure \ref{fg:fphant}) artifacts are still clearly visible.

The images in the middle line of Figure \ref{fg:fcirc-2} suggest that truncated $\sinc$ and nonuniform FFT based
reconstruction seem to perform quite similar. However, the differences to the direct Fourier reconstructions, shown in the bottom line in Figure \ref{fg:fcirc-2}, demonstrate the higher accuracy of the nonuniform FFT based algorithm.

The results in Figure \ref{fg:fcirc-3} show that all applied reconstruction algorithms are quite
stable with respect to data perturbation. In particular, the filtered back projection algorithm
produces images with the highest signal to noise ratio. However, only at the cost of a nearly 100 times longer
computation time (see Table \ref{tb:performance}).

\section{Conclusion}
We presented a novel fast Fourier reconstruction algorithm for
photoacoustic imaging using a limited planar detector array. The
proposed algorithm is based on the nonuniform FFT. Theoretical
investigation as well as numerical simulations show that our
algorithm produces better images than existing Fourier algorithms
with a similar  numerical complexity. Moreover the proposed algorithm
has been shown  to be stable against data perturbations.

\appendix[Sampling and Resolution]

Let $f$ be smooth function that vanishes outside $(0,X)^d$, and define $g$,
$f^\dag$ by \req{data-part}, \req{inv2}.
We further assume that $\Ft w_{\rm cut}$ is concentrated around zero and, that $f$ is
essentially bandlimited with essential bandwidth $\Omega$, in the
sense that $(\Ft f) (\f K)$ is negligible for $\abs{\f K}
\geq \Omega$. Note that since $f$ has bounded support, $\Ft f$
cannot vanish exactly on $\set{ \abs{\f K} \geq \Omega }$.

\begin{itemize}

  \item \textbf{Sampling of $g$.}
  Equation \req{inv} implies that
\begin{equation}\label{eq:shann-g}
    (\Ft g) (K_x, \omega)
    = \left( \Ft w_{\rm cut} \right) \ast \left( \Ft \Qo f \right) (K_x, \omega)\,,
\end{equation}
with
\begin{equation*}
    (\Ft \Qo f)
    (K_x, \omega)
    =
    \frac{2 \omega   (\Ft f) \left( K_x, \sign(\omega) \sqrt{\omega^2-\abs{K_x}^2}\right) }
        {\sign(\omega) \sqrt{\omega^2 - \abs{K_x}^2} }
   \end{equation*}
if $ \abs \omega  > \abs{K_x}^2$ and $    (\Ft \Qo f) (K_x, \omega)  = 0$ otherwise.
The assumption that $f$ has essential bandwidth $\Omega$ and
equation \req{shann-g}  imply that $(\Ft g) (K_x, \omega)$ is
negligible outside the set
\begin{equation*}
    \set{(K_x, \omega) :
    \abs{K_x} \leq \abs{\omega}  \leq \Omega }
    \subset
    (-\Omega, \Omega)^d \,.
\end{equation*}
Now Shannon's sampling theorem \cite{NatWue01,Uns00} states that  $g$
is sufficiently fine sampled if the step size in
$x$ and in $t$ satisfies the Nyquist condition $\Delta_{\step} \leq \pi / \Omega$.

\item \textbf{Sampling of $f^\dag$.}
Similar considerations as above again show that $f^\dag$ is
essentially bandlimited with essential bandwidth $\Omega$.
Shannon's sampling theorem implies that $f^\dag$ can be reliable
reconstructed from discrete samples taken with step size
$\Delta_{\step} \leq \pi / \Omega$.
\end{itemize}

If $f$ has essential bandwidth larger than $\Omega$, the function
$g$ has to be filtered with a low pass-filter \emph{before}
sampling. Otherwise,  sampling introduces aliasing  artifacts in the
reconstructed image.

Theoretically, the resolution (at least of the visible parts) can be increased ad infinity by simply
decreasing the sampling size $\Delta_{\step}$. In practical applications,
several other factors such as the bandwidth  of the ultrasound detection system
limit the bandwidth of the data, and therefore the resolution of  reconstructed images \cite{XuWan06}.
This, however, also guarantees that in practice a moderate sampling step size
$\Delta_{\step}$ gives correct sampling without aliasing.

%

\section*{Acknowledgement}
This work has been supported by the Austrian Science Fund (FWF)
within the framework of the NFN ``Photoacoustic Imaging in Biology and Medicine'',
Project S10505-N20.
Moreover, the work of M. Haltmeier has been supported by the technology transfer office
of the University Innsbruck (transIT).

\bibliographystyle{plain}

\begin{thebibliography}{10}
\providecommand{\url}[1]{#1}
\csname url@samestyle\endcsname
\providecommand{\newblock}{\relax}
\providecommand{\bibinfo}[2]{#2}
\providecommand{\BIBentrySTDinterwordspacing}{\spaceskip=0pt\relax}
\providecommand{\BIBentryALTinterwordstretchfactor}{4}
\providecommand{\BIBentryALTinterwordspacing}{\spaceskip=\fontdimen2\font plus
\BIBentryALTinterwordstretchfactor\fontdimen3\font minus
  \fontdimen4\font\relax}
\providecommand{\BIBforeignlanguage}[2]{{%
\expandafter\ifx\csname l@#1\endcsname\relax
\typeout{** WARNING: IEEEtran.bst: No hyphenation pattern has been}%
\typeout{** loaded for the language `#1'. Using the pattern for}%
\typeout{** the default language instead.}%
\else
\language=\csname l@#1\endcsname
\fi
#2}}
\providecommand{\BIBdecl}{\relax}
\BIBdecl

\bibitem{KruKisReiKruMil03}
R.~A. Kruger, W.~L. Kiser, D.~R. Reinecke, G.~A. Kruger, and K.~D. Miller,
  ``Thermoacoustic molecular imaging of small animals,'' \emph{Mol. Imaging},
  vol.~2, no.~2, pp. 113--123, 2003.

\bibitem{WanPanKuXieSto03}
X.~D. Wang, G.~Pang, Y. J.~Ku, X.~Y. Xie, G.~Stoica, and L.~V. Wang,
  ``Noninvasive laser-induced photoacoustic tomography for structural and
  functional {\it in vivo} imaging of the brain,'' \emph{Nature Biotech.},
  vol.~21, no.~7, pp. 803--806, 2003.

\bibitem{KruMilReyKisReiKru00}
R.~A. Kruger, K.~D. Miller, H.~E. Reynolds, W.~L. Kiser, D.~R. Reinecke, and
  G.~A. Kruger, ``Breast cancer in vivo: contrast enhancement with
  thermoacoustic {CT} at 434 {MH}z-feasibility study,'' \emph{Radiology}, vol.
  216, no.~1, pp. 279--283, 2000.

\bibitem{ManKhaHesSteLee05}
S.~Manohar, A.~Kharine, J.~C.~G. van Hespen, W.~Steenbergen, and T.~G. van
  Leeuwen, ``The {T}wente {P}hotoacoustic {M}ammoscope: system overview and
  performance,'' \emph{Physics in Medicine and Biology}, vol.~50, no.~11, pp.
  2543--2557, 2005.

\bibitem{KolHonSteMul03}
R.~G.~M. Kolkman, E.~Hondebrink, W.~Steenbergen, and F.~F.~M. De~Mul, ``In vivo
  photoacoustic imaging of blood vessels using an extreme-narrow aperture
  sensor,'' \emph{IEEE J. Sel. Topics Quantum Electron.}, vol.~9, no.~2, pp.
  343--346, 2003.

\bibitem{EseLarLarDeyMotPro02}
R.~O. Esenaliev, I.~V. Larina, K.~V. Larin, D.~J. Deyo, M.~Motamedi, and D.~S.
  Prough, ``Optoacoustic technique for noninvasive monitoring of blood
  oxygenation: a feasibility study,'' \emph{App. Opt.}, vol.~41, no.~22, pp.
  4722--4731, 2002.

\bibitem{And88}
L.~E. Andersson, ``On the determination of a function from spherical
  averages,'' \emph{SIAM J. Math. Anal.}, vol.~19, no.~1, pp. 214--232, 1988.

\bibitem{BurBauGruHalPal07}
P.~Burgholzer, J.~Bauer-Marschallinger, H.~Gr\"{u}n, M.~Haltmeier, and
  G.~Paltauf, ``Temporal back-projection algorithms for photoacoustic
  tomography with integrating line detectors,'' \emph{Inverse Probl.}, vol.~23,
  no.~6, pp. 65--80, 2007.

\bibitem{Faw85}
J.~A. Fawcett, ``Inversion of {$n$}-dimensional spherical averages,''
  \emph{SIAM J. Appl. Math.}, vol.~45, no.~2, pp. 336--341, 1985.

\bibitem{XuWan05}
M.~Xu and L.~V. Wang, ``Universal back-projection algorithm for photoacoustic
  computed tomography,'' \emph{Phys. Rev. E}, vol.~71, no.~1, pp.
  0\,167\,061--0\,167\,067 (electronic), 2005.

\bibitem{AnaZhaModRiv07}
M.~A. Anastasio, D.~Zhang, J.~Modgil, and P.~L. La~Riviere, ``Application of
  inverse source concepts to photoacoustic tomography,'' \emph{Inverse Probl.},
  vol.~23, no.~6, pp. S21--S35, 2007.

\bibitem{NorLin81}
S.~J. Norton and M.~Linzer, ``Ultrasonic reflectivity imaging in three
  dimensions: Exact inverse scattering solutions for plane, cylindrical and
  spherical apertures,'' \emph{IEEE Trans. Biomed. Eng.}, vol.~28, no.~2, pp.
  202--220, 1981.

\bibitem{XuFenWan02}
Y.~Xu, D.~Feng, and L.~V. Wang, ``Exact frequency--domain reconstrcution for
  thermoacosutic tomography --- i: Planar geometry,'' \emph{IEEE Trans. Med.
  Imag.}, vol.~21, no.~7, pp. 823--828, 2002.

\bibitem{KoeFraNiePalWeb01}
K.~K{\"o}stli, D.~Frauchinger, J.~Niederhauser, G.~Paltauf, H.~Weber, and
  M.~Frenz, ``Optoacoustic imaging using a three-dimensional reconstruction
  algorithm,'' \emph{IEEE J. Quantum Electron.}, vol.~7, 2001.

\bibitem{KoeFreBebWeb01}
K.~P. K\"ostli, M.~Frenz, H.~Bebie, and H.~P. Weber, ``Temporal backward
  projection of optoacoustic pressure transients using fourier transform
  methods,'' \emph{Phys. Med. Biol.}, vol.~46, pp. 1863--1872, 2001.

\bibitem{HalSchuSch05}
M.~Haltmeier, T.~Schuster, and O.~Scherzer, ``Filtered backprojection for
  thermoacoustic computed tomography in spherical geometry,'' \emph{Math.
  Methods Appl. Sci.}, vol.~28, no.~16, pp. 1919--1937, 2005.

\bibitem{Nat86}
F.~Natterer, \emph{The Mathematics of Computerized Tomography}.\hskip 1em plus
  0.5em minus 0.4em\relax Stuttgart: Teubner, 1986.

\bibitem{Bey95}
G.~Beylkin, ``On the fast {F}ourier transform of functions with
  singularities,'' \emph{Applied and Computational Harmonic Analysis}, vol.~2,
  no.~4, 1995.

\bibitem{DutRok93}
A.~Dutt and V.~Rokhlin, ``Fast {F}ourier transforms for nonequispaced data,''
  \emph{SIAM Journal on Scientific Computing}, vol.~14, no.~6, 1993.

\bibitem{FesSut03}
J.~A. Fessler and B.~P. Sutton, ``Nonuniform fast {F}ourier transforms using
  min-max interpolation,'' \emph{IEEE Trans. Signal Process.}, vol.~51, no.~2,
  pp. 560--574, 2003.

\bibitem{GreLee04}
L.~Greengard and J.~Lee, ``Accelerating the nonuniform fast {F}ourier
  transform,'' \emph{SIAM Rev.}, vol.~46, no.~3, pp. 443--454 (electronic),
  2004.

\bibitem{PotSteTas01}
D.~Potts, G.~Steidl, and M.~Tasche, ``Fast {F}ourier transforms for
  nonequispaced data: a tutorial,'' in \emph{Modern sampling theory}, ser.
  Appl. Numer. Harmon. Anal.\hskip 1em plus 0.5em minus 0.4em\relax Boston, MA:
  Birkh\"auser Boston, 2001, pp. 247--270.

\bibitem{Ste98}
G.~Steidl, ``A note on fast {F}ourier transforms for nonequispaced grids,''
  \emph{Advances in Computational Mathematics}, vol.~9, no. 3-4, pp. 337--352,
  1998.

\bibitem{BroBroZibAzh02}
M.~M. Bronstein, A.~M. Bronstein, M.~Zibulevsky, and H.~Azhari,
  ``Reconstruction in diffraction ultrasound tomography using nonuniform fft,''
  \emph{IEEE Trans. Med. Imag.}, vol.~21, no.~11, pp. 1395--1401, 2002.

\bibitem{Fes07}
J.~A. Fessler, ``On {NUFFT}-based gridding for non-cartesian {MRI},''
  \emph{Journal of Magnetic Resonance}, pp. 191--195, 2007.

\bibitem{GotGusFor01}
D.~Gottleib, B.~Gustafsson, and P.~Forssen, ``On the direct fourier method for
  computer tomography,'' \emph{IEEE Trans. Med. Imag.}, vol.~19, no.~3, pp.
  223--232, 2000.

\bibitem{Osu85}
J.~D. O'Sullivan, ``A fast sinc function gridding algorithm for {Fourier}
  inversion in computer tomography,'' \emph{IEEE Trans. Med. Imag.}, vol.~4,
  pp. 200--207, 1985.

\bibitem{SchoTim95}
H.~Schomberg and J.~Timmer, ``The gridding method for image reconstruction by
  {F}ourier transformation,'' \emph{IEEE Trans. Med. Imag.}, vol.~14, pp.
  596--607, 1995.

\bibitem{Fou03}
K.~Fourmont, ``Non-equispaced fast {F}ourier transforms with applications to
  tomography,'' \emph{J. Fourier Anal. Appl.}, vol.~9, no.~5, pp. 431--450,
  2003.

\bibitem{KucKun08}
P.~Kuchment and L.~A. Kunyansky, ``Mathematics of thermoacoustic and
  photoacoustic tomography,'' \emph{European J. Appl. Math.}, vol.~19, pp.
  191--224, 2008.

\bibitem{PatSch07}
S.~K. Patch and O.~Scherzer, ``Special section on photo- and thermoacoustic
  imaging,'' \emph{Inverse Probl.}, vol.~23, pp. S1--S122, 2007.

\bibitem{SchGraGroHalLen09}
O.~Scherzer, M.~Grasmair, H.~Grossauer, M.~Haltmeier, and F.~Lenzen,
  \emph{Variational Methods in Imaging}, ser. Applied Mathematical
  Sciences.\hskip 1em plus 0.5em minus 0.4em\relax New York: Springer, 2009,
  vol. 167.

\bibitem{XuWan06}
M.~Xu and L.~V. Wang, ``Photoacoustic imaging in biomedicine,'' \emph{Rev. Sci.
  Instruments}, vol.~77, no.~4, pp. 1--22, 2006, article ID 041101.

\bibitem{KoeBea03}
K.~K{\"o}stli and P.~Beard, ``Two-dimensional photoacoustic imaging by use of
  fourier-transform image reconstruction and a detector with an anisotropic
  response,'' \emph{App. Opt.}, vol.~42, no.~10, 2003.

\bibitem{PalNusHalBur07b}
G.~Paltauf, R.~Nuster, M.~Haltmeier, and P.~Burgholzer, ``Experimental
  evaluation of reconstruction algorithms for limited view photoacoustic
  tomography with line detectors,'' \emph{Inverse Probl.}, vol.~23, no.~6, pp.
  81--94, 2007.

\bibitem{PanAna02}
X.~Pan and M.~A. Anastasio, ``On a limited-view reconstruction problem in
  diffraction tomography,'' \emph{IEEE Trans. Med. Imag.}, vol.~21, pp.
  413--416, 2002.

\bibitem{XuYWanAmbKuc04}
Y.~Xu, L.~V. Wang, G.~Ambartsoumian, and P.~Kuchment, ``Reconstructions in
  limited-view thermoacoustic tomography,'' \emph{Med. Phys.}, vol.~31, no.~4,
  pp. 724--733, 2004.

\bibitem{LouQui00}
A.~Louis and E.~Quinto, ``Local tomographic methods in sonar,'' in
  \emph{Surveys on solution methods for inverse problems}.\hskip 1em plus 0.5em
  minus 0.4em\relax Vienna: Springer, 2000, pp. 147--154.

\bibitem{NatWue01}
F.~Natterer and F.~W{\"u}bbeling, \emph{Mathematical Methods in Image
  Reconstruction}, ser. Monographs on Mathematical Modeling and
  Computation.\hskip 1em plus 0.5em minus 0.4em\relax Philadelphia, PA: SIAM,
  2001, vol.~5.

\bibitem{Uns00}
M.~Unser, ``Sampling---50 {Y}ears after {S}hannon,'' \emph{Proceedings of the
  {IEEE}}, vol.~88, no.~4, pp. 569--587, 2000.

\bibitem{JaeSchuGerKitFre07}
M.~Jaeger, S.~Sch\"{u}pbach, A.~Gertsch, M.~Kitz, and M.~Frenz, ``Fourier
  reconstruction in optoacoustic imaging using truncated regularized inverse
  k-space interpolation,'' \emph{Inverse Probl.}, vol.~23, pp. S51--S63, 2007.

\bibitem{CoxArrBea07}
B.~T. Cox, S.~R. Arridge, and P.~C. Beard, ``Photoacoustic tomography with a
  limited-aperture planar sensor and a reverberant cavity,'' \emph{Inverse
  Probl.}, vol.~23, no.~6, pp. S95--S112, 2007.

\bibitem{Kun07b}
L.~A. Kunyansky, ``A series solution and a fast algorithm for the inversion of
  the spherical mean radon transform,'' \emph{Inverse Probl.}, vol.~23, no.~6,
  pp. S11--S20, 2007.

\bibitem{CouHil62}
R.~Courant and D.~Hilbert, \emph{Methods of Mathematical Physics}.\hskip 1em
  plus 0.5em minus 0.4em\relax New York: Wiley-Interscience, 1962, vol.~2.
\end{thebibliography}
\def\cprime{$'$} \def\cprime{$'$} \def\cprime{$'$}

\end{document}